\theoremstyle{thmstyleone}%
\newtheorem{theorem}{Theorem}
\newtheorem{lemma}{Lemma}
\begin{document}
\title[Block CG algorithms revisited]{Block CG algorithms revisited}

\author*[1]{\fnm{Petr} \sur{Tich\'y}}\email{petr.tichy@mff.cuni.cz}
\author[2]{\fnm{G\'erard} \sur{Meurant}}\email{gerard.meurant@gmail.com}
\author[1]{\fnm{Dorota} \sur{\v{S}imonov\'a}}\email{simonova@karlin.mff.cuni.cz}
\affil*[1]{\orgdiv{Faculty of Mathematics and Physics}, \orgname{Charles University}, \orgaddress{\street{Sokolovsk\'a~83}, \city{Prague}, \postcode{18675}, \country{Czech Republic}}}
\affil[2]{\orgaddress{\city{Paris},  \country{France}}}

\abstract{
Our goal in this paper is to clarify the relationship between the
block Lanczos and the block conjugate gradient (BCG) algorithms. Under
the full rank assumption for the block vectors, we show the one-to-one
correspondence between the algorithms. This allows, for example, the
computation of the block Lanczos coefficients in BCG. The availability
of block Jacobi matrices in BCG opens the door for further development,
e.g., for error estimation in BCG based on (modified) block Gauss
quadrature rules. Driven by the need to get a practical variant of
the BCG algorithm well suited for computations in finite precision
arithmetic, we also discuss some important variants of BCG due to
Dubrulle. These variants avoid the troubles with a possible rank deficiency within the block vectors. We show how to incorporate preconditioning and computation of Lanczos coefficients into these variants. We hope to help clarify which variant of the block conjugate gradient algorithm should be used for computations in finite precision arithmetic. Numerical results illustrate the performance of different variants of BCG on some examples. 
}
\date{1.11.2024}

\keywords{Conjugate gradients, block methods, multiple right-hand sides}

\pacs[MSC Classification]{65F10, 65G50}

\maketitle

\begin{center}
\parbox{0.618\textwidth}{
{\em This paper is dedicated  to Michela Redivo-Zaglia on the occasion of her retirement and to Hassane Sadok on the occasion of his 65th birthday.}}
\end{center}

\section{Introduction}

Consider the problem of solving several systems of linear algebraic
equations 
\[
Ax^{(i)}=b^{(i)},\quad i=1,\dots,m,
\]
where $A\in\mathbb{R}^{n\times n}$ is symmetric and positive definite,
$m\geq1$, and $b^{(i)}\in\mathbb{R}^{n},$ $i=1,\dots,m$. Denoting
$b=[b^{(1)},\dots,b^{(m)}]$ and $x=[x^{(1)},\dots,x^{(m)}]$, we
want to solve the system $Ax=b$ with a block right-hand side $b\in\mathbb{R}^{n\times m}.$
If $A$ is large and sparse, then it is natural to apply block Krylov
subspace methods, in particular the (preconditioned) block conjugate
gradient (BCG) method. In block methods, $A$ is applied to a block
of vectors rather than just a single vector at each iteration. This
operation is used to construct a common subspace for all right-hand
sides, producing a richer search subspace. From a computational perspective,
block methods have the potential to benefit from the capabilities
of modern computers. The block-level operations have been shown to
be efficient in a high-performance computing environment. For a comprehensive
overview and summary of block Krylov subspace methods, see \cite{FrLuSz2017}.

Several versions of BCG (and related methods) were introduced by O'Leary
in~\cite{OL1980}. Note that the proposed algorithms also include
auxiliary nonsingular block coefficients which can be freely chosen.
They can be used as scaling factors. These scaling factors will play
an important role later in our discussion. In~\cite{OL1980} it was
observed that the block algorithms can exhibit numerical instability
when the vectors within a block become (nearly) linearly dependent.
Rank deficiency can lead to inaccurate computation of (ill-conditioned)
block coefficients, and consequently can slow down convergence and
reduce the maximum attainable accuracy. In rank deficient cases, it
was suggested to monitor the rank of the blocks and to reduce the
block size if necessary. Nevertheless, the algorithmic details were
not given in \cite{OL1980}.

Several attempts have been made to improve BCG for solving practical
problems. In general, approximate solutions for different right-hand
sides cannot be expected to converge similarly. In \cite{FeOwPe1995}
the authors suggest that the converged approximations can be removed,
and one could continue the BCG algorithm only with unconverged ones.
However, a technical realization of this approach is not present in
the given paper. This improvement does not completely
solve the rank deficiency problem. Nikishin and Yeremin \cite{NiYe1995}
addressed the problem of monitoring near rank deficiency and presented
a variable block CG method, which is used to speed-up the solution
of single right-hand side systems.

An alternative approach to address the issue of singular or nearly
singular matrices in BCG can be found in a paper by Dubrulle \cite{Du2001}.
The approach suggests maintaining the block size and ensuring that
the blocks are well conditioned (the vectors within a block remain
sufficiently well linearly independent). This guarantees that the
block coefficients are well-defined and can be computed in a numerically
stable manner. The approach presented by Dubrulle has two main ingredients.
First, the BCG algorithms of O'Leary can be reformulated by
a convenient choice of the scaling factors. Second, the basis (related
to a direction block or to a residual block) can be changed in order
to implicitly eliminate the effects of rank deficiencies. For example, a QR factorization of a rank-deficient block $v\in\mathbb{R}^{n\times m}$ can be computed such that the $Q$-factor of size $n\times m$ has full column rank, and the upper triangular $R$-factor of size $m\times m$ is singular. Such a QR factorization can be obtained using the MATLAB command  
\[
\mathtt{[w,\sigma]=qr(v,"econ")}.
\]
Since this command relies on Householder reflections, we will refer to it as the Householder QR from now on.
The resulting block $w\in\mathbb{R}^{n\times m}$ has always orthonormal
columns. If $v$ is rank deficient, then the upper triangular matrix
$\sigma\in\mathbb{R}^{n\times m}$ is singular. However, the algorithm
can be formulated such that it uses only the block $w$ and not the inverse of the coefficient matrix $\sigma$. Then the inverses of the corresponding
matrices $w^{T}w$ and $w^{T}Aw$, which are used to compute the block
coefficients, are always well defined. One can observe that in the
rank deficient case, some columns of $w$ are artificially created.
However, these spurious vectors do not cause any harm in the block
CG process. In the following, we will refer to the process of creating
a well conditioned block $w$ as a regularization of the block
$v$. Note that the span of columns of $w$ contains the span of columns
of $v$. In BCG, one can regularize either the direction blocks or
the residual blocks. For the purpose of regularization, one may use
a QR factorization or an LU factorization with pivoting, as discussed
by Dubrulle. It is even possible to ensure $A$-orthogonality within
the regularized direction block, without any additional multiplication
with $A$. The two most promising Dubrulle variants are discussed
in Section~\ref{sec:Practical}.

An idea analogous to that of Dubrulle was presented by Ji and Li in
\cite{HaYa2017}. The algorithm proposed in \cite{HaYa2017} is referred
to as a breakdown-free block CG. Similarly to \cite{Du2001},
Ji and Li first reformulate BCG using a convenient choice of scaling
factors and then focus on the change of basis for the direction block.
In particular, an orthonormal basis for the direction block is computed.
In case of a (near) rank deficiency, the corresponding vectors within
the direction block are removed. In summary, while Dubrulle preserves
the block sizes by incorporating spurious direction vectors, Ji and
Li \cite{HaYa2017} eliminate the (nearly) linearly dependent vectors
from the direction block. As will be demonstrated by our numerical
experiments, the removal of vectors is not always an optimal approach.
In finite precision computations, the algorithm proposed by
Ji and Li typically exhibits inferior performance compared to the
algorithms proposed by Dubrulle.

Birk and Frommer \cite{BiFr2014} present a deflated shifted block
CG method that effectively integrates several techniques. The method
is based on the Hermitian version of the non-symmetric block Lanczos
algorithm proposed in \cite{AlBoFrHe2000}, which includes deflation.
Deflation here means the process of detecting and
removing linearly dependent vectors in the block Krylov sequences.
Due to the shifts, using the block Lanczos algorithm as the engine
for BCG is a natural choice, as the basis of the underlying block
Krylov subspace only needs to be generated once. Note that the resulting
algorithm, which incorporates deflation techniques, can also be used
to solve a single block system $Ax=b$. Thus, it may be regarded as
a variant of BCG, which addresses the problem of rank deficiency.

This paper has two goals. The first goal is to clarify the relationship
between blocks and coefficient matrices that appear in the block CG
and block Lanczos algorithms. In particular, assuming exact arithmetic
and full rank of the corresponding block Krylov subspaces, we will
show the relation between the BCG residual blocks and the Lanczos
blocks. We also show how to compute in BCG the block Jacobi matrices,
defined by the block Lanzcos coefficients. This clarification
opens the door for further developments, e.g., for error estimation
based on modifications of block Jacobi matrices; see our forthcoming paper~\cite{MeTi2025} and the books \cite{B-GoMe2010}
and \cite{B-MeTi2024} for an overview.

The second goal is to formulate a variant of preconditioned BCG that
is suitable for practical computation in finite precision arithmetic.
In particular, we would like to have a version that
avoids problems with rank deficiency and that potentially allows to
monitor the $A$-norm of the error for each system and to remove a
system if an approximate solution is accurate enough. At the same
time, we want to reconstruct in such a practical version of BCG the
underlying block Jacobi matrices.

The paper is organized as follows. In Section~\ref{sec:Lanczos-and-CG}
we recall the classical vector versions of the Lanzcos and CG algorithms,
and their relationship. Block versions of these algorithms are discussed
in Section~\ref{sec:Block-Lanczos-and}. Assuming exact arithmetic
and full dimension of the block Krylov subspace, the connection between
the block Lanczos and CG algorithms is investigated in Section~\ref{sec:Lanczos-CG}.
In Section~\ref{sec:Practical} we formulate block CG variants which
are suitable for practical computation and in Section~\ref{sec:Practical-solving}
we discuss some practical issues like preconditioning
and stopping criteria. Finally, in Section~\ref{sec:Numerical-experiments}
we present numerical experiments justifying our considerations.

\section{Lanczos and CG\label{sec:Lanczos-and-CG}}

Let us first discuss the case $m=1$, i.e., the classical vector case.
Consider a sequence of nested Krylov subspaces 
\begin{equation}
\mathcal{K}_{k}(A,v)\equiv\mathrm{span}\{v,Av,\dots,A^{k-1}v\},\qquad k=1,2,\dots,\label{eq:classicK}
\end{equation}
where $v\in\mathbb{R}^{n}$ is a given starting vector and $A\in\mathbb{R}^{n\times n}$
is symmetric. The dimension of those Krylov subspaces is increasing
up to an index $d$ called the \emph{grade of $v$ with respect to
$A$}, at which the maximal dimension is attained, and $\mathcal{K}_{d}(A,v)$
is invariant under multiplication with $A$.

\begin{algorithm}[h]
\caption{Lanczos algorithm}
\label{alg:lanczos}

\begin{algorithmic}[1]

\State \textbf{input} $A$, $v$

\State $v_{0}=0$

\State $\beta_{1}v_{1}=v$\label{line:beta1}

\For{$k=1,\dots$}

\State $w=Av_{k}-\beta_{k}v_{k-1}$

\State $\alpha_{k}=v_{k}^{T}w$

\State $w=w-\alpha_{k}v_{k}$

\State $\beta_{k+1}v_{k+1}=w$\label{line:betak}

\EndFor

\end{algorithmic}
\end{algorithm}

Assuming that $k<d$, the Lanczos algorithm (Algorithm~\ref{alg:lanczos})
computes a sequence of \emph{Lanczos vectors} $v_{1},\dots,v_{k+1}$
which form an orthonormal basis of $\mathcal{K}_{k+1}(A,v)$. Note
that the notation $\beta_{k+1}v_{k+1}=w$ used on lines \ref{line:beta1}
and \ref{line:betak} of Algorithm~\ref{alg:lanczos} means that
$v_{k+1}$ is obtained by normalization of the given vector $w$ with
the positive normalization coefficient $\beta_{k+1}$. In other words,
$\beta_{k+1}=\Vert w\Vert$ and $v_{k+1}=w/\beta_{k+1}$. The Lanczos
vectors satisfy a three-term recurrence 
\begin{equation}
\beta_{j+1}v_{j+1}=Av_{j}-\alpha_{j}v_{j}-\beta_{j}v_{j-1},\qquad j=1,\dots,k,\label{eq:threeterm}
\end{equation}
which can be written in matrix form as 
\[
AV_{k}=V_{k}T_{k}+\beta_{k+1}v_{k+1}e_{k}^{T},
\]
where $V_{k}=[v_{1},\dots,v_{k}]$, the vector $e_{k}$ denotes the
$k$th column of the identity matrix of an appropriate size (here
of size $k$), and 
\begin{equation}
T_{k}=\left[\begin{array}{cccc}
\alpha_{1} & \beta_{2}\\
\beta_{2} & \ddots & \ddots\\
 & \ddots & \ddots & \beta_{k}\\
 &  & \beta_{k} & \alpha_{k}
\end{array}\right]\label{eq:Tk}
\end{equation}
is the $k$ by $k$ symmetric tridiagonal matrix of the Lanczos coefficients
computed in Algorithm~\ref{alg:lanczos}. The coefficients $\beta_{j}$
being positive, $T_{k}$ is a \emph{Jacobi matrix}. Note that if $A$
is positive definite, then $T_{k}=V_k^TAV_k$ is positive definite as well.

\begin{algorithm}[h]
\caption{Conjugate gradients}
\label{alg:cg}

\begin{algorithmic}[1]

\State \textbf{input} $A$, $b$, $x_{0}$

\State $r_{0}=b-Ax_{0}$

\State $p_{0}=r_{0}$\label{line:dir1}

\For{$k=1,\dots$ until convergence}

\State $\gamma_{k-1}=\frac{r_{k-1}^{T}r_{k-1}}{p_{k-1}^{T}Ap_{k-1}}$

\State $x_{k}=x_{k-1}+\gamma_{k-1}p_{k-1}$

\State $r_{k}=r_{k-1}-\gamma_{k-1}Ap_{k-1}$

\State $\delta_{k}=\frac{r_{k}^{T}r_{k}}{r_{k-1}^{T}r_{k-1}}$

\State $p_{k}=r_{k}+\delta_{k}p_{k-1}$\label{line:dirk}

\EndFor

\end{algorithmic} 
\end{algorithm}
When solving a system of linear algebraic equations $Ax=b$ with symmetric
and positive definite matrix $A$, the standard Hestenes and Stiefel
version of the CG method (Algorithm~\ref{alg:cg}) 
can be used. It is well known that in exact arithmetic, the CG iterates
$x_{k}$ minimize the $A$-norm of the error over the manifold $x_{0}+\mathcal{K}_{k}(A,r_{0})$,
\[
\|x-x_{k}\|_{A}=\min_{y\in x_{0}+\mathcal{K}_{k}(A,r_{0})}\|x-y\|_{A},
\]
the residual vectors $r_{j}=b-Ax_{j}$, $j=0,\dots,k$, are mutually
orthogonal, and the direction vectors $p_{j}$, $j=0,\dots,k$, are
$A$-orthogonal (conjugate). Moreover, 
\[
\mathrm{span}\{r_{0},\dots,r_{k-1}\}=\mathrm{span}\{p_{0},\dots,p_{k-1}\}=\mathcal{K}_{k}(A,r_{0}).
\]
In exact arithmetic, if $r_{k}=0$ or $p_{k}=0$, then the algorithm has found
the solution and the maximum dimension $d$ of the corresponding Krylov
subspace has been reached. 
The coefficients $\gamma_{k}$ and $\delta_{k+1}$
are strictly positive since $A$ is positive definite.

If the Lanczos algorithm is applied to the symmetric and positive
definite $A$ and $v=r_{0}$, then the CG residual vectors $r_{j}=b-Ax_{j}$
are collinear with the Lanczos vectors $v_{j+1}$, 
\begin{equation}
v_{j+1}=(-1)^{j}\frac{r_{j}}{\|r_{j}\|}\,,\qquad j=0,\dots,k.\label{eq:vectors}
\end{equation}
Thanks to this close relationship it can be shown (see, for instance
\cite{B:Me2006}) that the recurrence coefficients of both algorithms
are linked through 
\[
\beta_{k+1}=\frac{\sqrt{\delta_{k}}}{\gamma_{k-1}},\quad\alpha_{k}=\frac{1}{\gamma_{k-1}}+\frac{\delta_{k-1}}{\gamma_{k-2}},\quad\delta_{0}=0,\quad\gamma_{-1}=1.
\]
Writing these formulas in a matrix form, we get 
\begin{equation}
T_{k}=L_{k}D_{k}L_{k}^{T},\label{eq:coefficients}
\end{equation}
where $L_{k}$ is unit lower bidiagonal and $D_{k}$ is diagonal,
\[
L_{k}=\left[\begin{array}{cccc}
1\\
\ell_{1} & \ddots\\
 & \ddots & \ddots\\
 &  & \ell_{k-1} & 1
\end{array}\right],\quad D_{k}=\left[\begin{array}{cccc}
d_{1}\\
 & \ddots\\
 &  & \ddots\\
 &  &  & d_{k}
\end{array}\right]
\]
with 
\[
\ell_{j}=\sqrt{\delta_{j}},\quad j=1,\dots,k-1,\qquad d_{j}=\gamma_{j-1}^{-1},\quad j=1,\dots,k.
\]
In other words, CG implicitly computes the Cholesky factorization
of the tridiagonal Jacobi matrix $T_{k}$ from the Lanczos algorithm.
Hence, the Lanczos-related quantities can be reconstructed in CG and
vice versa. More precisely, the CG approximations $x_{k}$ can be
computed from the matrix $V_{k}$ of the Lanczos vectors and the Lanczos
coefficient matrix $T_{k}$ via 
\begin{equation}
x_{k}=x_{0}+V_{k}y_{k},\qquad T_{k}y_{k}=\beta_{1}e_{1}.\label{eq:approximations}
\end{equation}
Clearly, this above-mentioned version of CG is not very efficient
since it requires storing the matrices $V_{k}$ and $T_{k}$. However,
a more efficient algorithm that implements \eqref{eq:approximations}
can be derived; see, e.g., \cite{B-Sa2003,SiTi2022}.

The relations discussed above show a one-to-one correspondence between
the Lanczos and CG algorithms. In this paper we want to formulate
analogous relations for the block versions of both algorithms.

\section{Block Lanczos and block CG\label{sec:Block-Lanczos-and}}

In this section, we introduce analogues of the Lanczos and CG algorithms
that work with blocks of vectors of size $n\times m$ instead of single
vectors. Given a starting block $v\in\mathbb{R}^{n\times m}$ and
a matrix $A\in\mathbb{R}^{n\times n}$, we can define the corresponding
generalization of the $k$th Krylov subspace \eqref{eq:classicK}
as 
\[
\mathcal{K}_{k}(A,v)\equiv\mathrm{colspan}\{v,Av,\dots,A^{k-1}v\},
\]
and we will call it the $k$th \emph{block Krylov subspace} generated
by $A$ and $v$. Here $\mathrm{colspan}$ denotes
the span of the columns of all blocks $v,Av,\dots,A^{k-1}v$. From
the definition it is clear that 
\[
\mathcal{K}_{k}(A,v)=\mathcal{K}_{k}(A,v^{(1)})+\mathcal{K}_{k}(A,v^{(2)})+\dots+\mathcal{K}_{k}(A,v^{(m)}),
\]
where $v^{(i)}$ denotes the $i$th column of $v$, $i=1,\dots,m$.
In the following, we will assume for simplicity that $\mathcal{K}_{k}(A,v)$
has full dimension, i.e., that 
\[
\dim\mathcal{K}_{k}(A,v)=km \leq n    .
\]
The case of rank deficiency will be discussed later in Section~\ref{sec:Practical}.
To denote the blocks of vectors and the block coefficients, we will
use the same notation as before, so at first glance the algorithms
look almost the same. The only difference will be in the size of the
objects we are working with. Instead of vectors of size $n\times1$
we will work with blocks of size $n\times m$, and instead of scalar
coefficients we will work with $m\times m$ coefficient matrices denoted
by the same letters as before. The justification for using the
same notation is that the algorithms that work with vectors can be
seen as a special case of block algorithms with $m=1$.

\subsection{Block Lanczos algorithm}

Suppose we are given a starting block $v\in\mathbb{R}^{n\times m}$
and a \emph{symmetric} matrix $A\in\mathbb{R}^{n\times n}$. Then
one can compute the orthonormal basis of the $k$th block Krylov subspace
$\mathcal{K}_{k}(A,v)$ of dimension $km$ using the block Lanczos
algorithm (Algorithm~\ref{alg:BLanczos}) introduced in \cite{T-Un1975}
and \cite{GoUn1977}. For practical efficiency, we use the modified
Gram-Schmidt version of this algorithm. 
\begin{algorithm}[th]
\caption{Block Lanczos}
\label{alg:BLanczos}

\begin{algorithmic}[1]

\State \textbf{input} $A$, $v$

\State $v_{0}=0$

\State $v_{1}\beta_{1}=v$

\For{$k=1,2,\dots$}

\State $w=Av_{k}-v_{k-1}\beta_{k}^{T}$

\State $\alpha_{k}=v_{k}^{T}w$

\State $w=w-v_{k}\alpha_{k}$

\State $v_{k+1}\beta_{k+1}=w$

\EndFor

\end{algorithmic} 
\end{algorithm}

Algorithm~\ref{alg:BLanczos} looks formally the same as Algorithm~\ref{alg:lanczos},
but now $v$, $v_{k}$, and $w$ are blocks of size $n\times m$,
and $\alpha_{k}$ and $\beta_{k}$ are coefficient matrices of size
$m\times m$. For simplicity, we will call the coefficient matrices
$\alpha_{k}$ and $\beta_{k}$ coefficients again. We use the notation
$v_{k+1}\beta_{k+1}=w$ with the meaning that $\beta_{k+1}$ is a
normalization coefficient such that $v_{k+1}^{T}v_{k+1}=I$, where
$I$ is the $m\times m$ identity matrix. In other words, $\beta_{k+1}$
is chosen such that 
\begin{equation}
\beta_{k+1}^{-T}w^{T}w\beta_{k+1}^{-1}=I,\quad\mbox{or, equivalently,}\quad w^{T}w=\beta_{k+1}^{T}\beta_{k+1}.\label{eq:normalization}
\end{equation}

It can be shown, see, e.g., \cite{GoUn1977}, that
the blocks $v_{k}$ produced by Algorithm~\ref{alg:BLanczos} satisfy
\[
v_{i}^{T}v_{j}=\delta_{i,j}I,\qquad i,j=1,\dots,k+1,
\]
where $\delta_{i,j}$ denotes the Kronecker delta. The coefficients
$\alpha_{k}$ are given by 
\[
\alpha_{k}=v_{k}^{T}w=v_{k}^{T}\left(Av_{k}-v_{k-1}\beta_{k}^{T}\right)=v_{k}^{T}Av_{k}.
\]
Note that if we compute $\alpha_k$ using $\alpha_k = v_k^T A v_k$, we obtain the classical Gram-Schmidt version of the block Lanczos algorithm, while Algorithm~\ref{alg:BLanczos} corresponds to the modified Gram-Schmidt version. Both versions are mathematically equivalent, but the modified Gram-Schmidt version better preserves orthogonality during finite precision computations.
Assuming that no breakdown occurs due to rank deficiency, we obtain
after $k$ iterations 
\begin{equation}
AV_{k}=V_{k}T_{k}+v_{k+1}\beta_{k+1}e_{k}^{T},\label{eq:Vrelation}
\end{equation}
where $V_{k}=[v_{1},v_{2},\dots,v_{k}]\in\mathbb{R}^{n\times km}$,
$e_{k}^{T}=[0,\dots,0,I]\in\mathbb{R}^{m\times km},$ and 
\begin{equation}
T_{k}=\left[\begin{array}{cccc}
\alpha_{1} & \beta_{2}^{T}\\
\beta_{2} & \ddots & \ddots\\
  & \ddots & \ddots & \beta_{k}^{T}\\
  &  & \beta_{k} & \alpha_{k}
\end{array}\right]\label{eq:blockT}
\end{equation}
is symmetric block tridiagonal with $m\times m$ blocks.

Concerning a particular choice of the normalization coefficients $\beta_{k+1}$
satisfying \eqref{eq:normalization}, the standard way introduced
by Golub and Underwood \cite{GoUn1977} is to define $\beta_{k+1}$
as the $R$-factor of the QR factorization of $w$. If $\beta_{k+1}$
is upper triangular, then the resulting block tridiagonal matrix $T_{k}$
has semi-bandwidth $m$, so that the block Lanczos algorithm is equivalent to the band Lanczos algorithm; see \cite{Ru1979}. Note that to define
$\beta_{k+1}$ satisfying \eqref{eq:normalization}, one could also
use the more expensive singular value decomposition or the polar decomposition
of $w$. Using the polar decomposition, the coefficient matrices $\beta_{k+1}$
would be symmetric and positive definite.

In the following we will consider Algorithm~\ref{alg:BLanczos}
with the standard choice of $\beta_{k+1}$, that is, computed using
a QR factorization. To define $\beta_{k+1}$ uniquely and to be consistent
with the case $m=1$, we will consider $\beta_{k+1}$ with positive
diagonal entries. Then, $\beta_{k+1}^{T}$ can also be seen as the
$L$-factor of the Cholesky factorization of $w^{T}w$.

\subsection{Block CG algorithm}

Consider now a system of linear algebraic equations with multiple
right-hand sides 
\[
Ax=b,
\]
where $A\in\mathbb{R}^{n\times n}$ is \emph{symmetric and positive
definite}, and $b\in\mathbb{R}^{n\times m}$. Let $x_{0}\in\mathbb{R}^{n\times m}$
be an initial approximate solution. Then one can compute approximate
solutions using the block conjugate gradient (BCG) method that was
introduced by O'Leary \cite{OL1980}; see Algorithm~\ref{alg:BCG}.

\begin{algorithm}[th]
\caption{O'Leary Block CG (OL-BCG)}
\label{alg:BCG}

\begin{algorithmic}[1]

\State \textbf{input} $A$, $b$, $x_{0}$

\State $r_{0}=b-Ax_{0}$

\State $p_{0}=r_{0}\phi_{0}$

\For{$k=1,2,\dots$}

\State $\gamma_{k-1}=\left(p_{k-1}^{T}Ap_{k-1}\right)^{-1}\phi_{k-1}^{T}r_{k-1}^{T}r_{k-1}$

\State $x_{k}=x_{k-1}+p_{k-1}\gamma_{k-1}$

\State $r_{k}=r_{k-1}-Ap_{k-1}\gamma_{k-1}$

\State $\delta_{k}=\phi_{k-1}^{^{-1}}\left(r_{k-1}^{T}r_{k-1}\right)^{-1}r_{k}^{T}r_{k}$

\State \label{line:direction}$p_{k}=\left(r_{k}+p_{k-1}\delta_{k}\right)\phi_{k}$

\EndFor

\end{algorithmic} 
\end{algorithm}

In this algorithm, the \emph{nonsingular} coefficient matrices $\phi_{k}\in\mathbb{R}^{m\times m}$
are free to choose. They play the role of scaling coefficient matrices
for the direction blocks $p_{k}$, see line~\ref{line:direction}
of Algorithm~\ref{alg:BCG}. Assuming again that the corresponding
block Krylov subspace $\mathcal{K}_{k}(A,r_{0})$ has full dimension
$km$, there is no breakdown in block CG and it holds that 
\[
\mathcal{K}_{k}(A,r_{0})=\mathrm{colspan}\{r_{0},r_{1},\dots,r_{k-1}\}=\mathrm{colspan}\{p_{0},p_{1},\dots,p_{k-1}\}.
\]
Denoting by $x_{k}^{(i)}$, $r_{k}^{(i)}$, $p_{k}^{(i)}$, $i=1,\dots,m$,
the columns of $x_{k}$, $r_{k}$, $p_{k}$ respectively, it holds
that 
\[
x_{k}^{(i)}\in x_{0}^{(i)}+\mathcal{K}_{k}(A,r_{0}),\quad r_{k}^{(i)}\perp\mathcal{K}_{k}(A,r_{0}),\quad p_{k}^{(i)}\perp_{A}\mathcal{K}_{k}(A,r_{0})=\mathcal{K}_{k}(A,p_{0}).
\]
As a consequence, $x_{k}^{(i)}$ minimizes 
\[
\left(y-x^{(i)}\right)^{T}A\left(y-x^{(i)}\right)=\Vert y-x^{(i)}\Vert_{A}^{2}
\]
over all vectors $y\in x_{0}^{(i)}+\mathcal{K}_{k}(A,r_{0})$; see
\cite[p.~301, Theorem 2]{OL1980}.

The classical choice $\phi_{k}=I$ leads to the Hestenes and Stiefel
version of BCG (HS-BCG). However, one can also choose $\phi_{k}$
differently, e.g., as the inverse of the $R$-factor from the QR factorization
of $r_{k}+p_{k-1}\delta_{k}$ so that $p_{k}$ has orthonormal columns.
The choice of $\phi_{k}$ can be used, e.g., to
derive other BCG versions, see \cite{OL1980,Du2001} and Section~\ref{sec:Practical},
and some of these versions can avoid breakdown in the rank deficient
case.

To discuss the relation between the Block CG and the Block Lanczos
algorithms, we note that $x_{k}$ and $r_{k}$ are unique, but the
columns $r_{k}^{(i)}$ of $r_{k}$ are not mutually orthogonal in
general. This observation indicates that the analog of relation \eqref{eq:vectors}
will not be straightforward in the block case.

It is well known that the BCG block residuals can be computed using
a three-term recurrence; see, e.g., \cite{OL1980,NiYe1995}. Here
we derive this recurrence because it is a key for formulating the
relation between block Lanczos and block CG algorithms.\medskip

\begin{lemma}
\label{lem:rel1}Consider the quantities determined by Algorithm~\ref{alg:BCG}.
The BCG residual blocks $r_{j}$, $j=1,\dots,k$, satisfy
\begin{equation}
AR_{k}=R_{k}\widehat{T}_{k}-r_{k}\gamma_{k-1}^{-1}\phi_{k-1}^{-1}e_{k}^{T}\label{eq:original}
\end{equation}
with $R_{k}\in\mathbb{R}^{n\times km},$ $R_{k}\equiv\left[r_{0},\dots,r_{k-2},r_{k-1}\right]$,
and the $km\times km$ block tridiagonal coefficient
matrix 
\begin{eqnarray*}
\widehat{T}_{k} & \equiv & \left[\begin{array}{cccc}
\left(\gamma_{0}^{-1}\phi_{0}^{-1}\right) & -\gamma_{0}^{-1}\delta_{1}\\
-\gamma_{0}^{-1}\phi_{0}^{-1} & \left(\gamma_{1}^{-1}\phi_{1}^{-1}+\gamma_{0}^{-1}\delta_{1}\right) & \ddots\\
 & \ddots & \ddots & -\gamma_{k-2}^{-1}\delta_{k-1}\\
 &  & -\gamma_{k-2}^{-1}\phi_{k-2}^{-1} & \left(\gamma_{k-1}^{-1}\phi_{k-1}^{-1}+\gamma_{k-2}^{-1}\delta_{k-1}\right)
\end{array}\right].
\end{eqnarray*}
The matrix $\widehat{T}_{k}$ can be written in the factorized form 
\[
\left[\begin{array}{cccc}
I\\
-I & I\\
 & \ddots & \ddots\\
 &  & -I & I
\end{array}\right]\left[\begin{array}{cccc}
\gamma_{0}^{-1}\phi_{0}^{-1} &\\
 & \gamma_{1}^{-1}\phi_{1}^{-1} & \\
 &  & \ddots & \\
 &  &  & \gamma_{k-1}^{-1}\phi_{k-1}^{-1}
\end{array}\right]
\left[\begin{array}{cccc}
I & -\phi_{0}\delta_{1}\\
 & I & \ddots\\
 &  & \ddots & -\phi_{k-2}\delta_{k-1}\\
 &  &  & I
\end{array}\right].
\]
\end{lemma}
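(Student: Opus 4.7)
The plan is to derive the three-term recurrence for the residual blocks by eliminating the direction blocks from the two coupled one-step recurrences of Algorithm~\ref{alg:BCG}, and then read off both $\widehat T_k$ and its factorization from a matrix-level identity. Concretely, I would first rewrite the update formulas as
\[
Ap_{j-1}\gamma_{j-1}=r_{j-1}-r_{j},\qquad r_{j}=p_{j}\phi_{j}^{-1}-p_{j-1}\delta_{j}\ (j\ge 1),\qquad r_{0}=p_{0}\phi_{0}^{-1},
\]
the second relation being an immediate consequence of line~\ref{line:direction} of Algorithm~\ref{alg:BCG} once one multiplies by $\phi_{j}^{-1}$.

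Next, I would collect these identities column-wise into matrix form. Setting $P_k=[p_0,\dots,p_{k-1}]$ and $R_k=[r_0,\dots,r_{k-1}]$, the residual update gives
\[
AP_{k}=R_{k}\,\tilde L_{k}-r_{k}\gamma_{k-1}^{-1}e_{k}^{T},
\qquad
\tilde L_{k}=L_{k}\,\mathrm{blockdiag}(\gamma_{0}^{-1},\dots,\gamma_{k-1}^{-1}),
\]
where $L_{k}$ is the block lower bidiagonal matrix with $I$ on the diagonal and $-I$ on the subdiagonal appearing in the statement. The direction-block recurrence gives $R_{k}=P_{k}B_{k}$ with $B_{k}$ upper bidiagonal, namely $B_{k}=\mathrm{blockdiag}(\phi_{0}^{-1},\dots,\phi_{k-1}^{-1})\,U_{k}$, where $U_{k}$ is the block upper bidiagonal factor appearing in the statement.

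I would then combine the two identities: $AR_{k}=AP_{k}B_{k}=R_{k}\,\tilde L_{k}B_{k}-r_{k}\gamma_{k-1}^{-1}e_{k}^{T}B_{k}$. A direct look at the last block row of $B_{k}$ shows $e_{k}^{T}B_{k}=\phi_{k-1}^{-1}e_{k}^{T}$, which produces the boundary term in \eqref{eq:original}. For the interior part, the factorization
\[
\tilde L_{k}B_{k}=L_{k}\,\mathrm{blockdiag}(\gamma_{0}^{-1},\dots,\gamma_{k-1}^{-1})\,\mathrm{blockdiag}(\phi_{0}^{-1},\dots,\phi_{k-1}^{-1})\,U_{k}=L_{k}D_{k}U_{k}
\]
holds because the two block-diagonal factors coalesce into the middle factor $D_{k}=\mathrm{blockdiag}(\gamma_{j-1}^{-1}\phi_{j-1}^{-1})$ of the statement; this is exactly the displayed $LDU$ product for $\widehat T_{k}$. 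A short direct multiplication of the three bidiagonal factors (two non-zero block rows at a time) then recovers the diagonal blocks $\gamma_{j-1}^{-1}\phi_{j-1}^{-1}+\gamma_{j-2}^{-1}\delta_{j-1}$, the subdiagonal blocks $-\gamma_{j-1}^{-1}\phi_{j-1}^{-1}$, and the superdiagonal blocks $-\gamma_{j-1}^{-1}\delta_{j}$, matching $\widehat T_{k}$.

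The only genuine subtlety is that the $m\times m$ matrices $\phi_{j}$, $\gamma_{j}$, and $\delta_{j}$ do not commute in general, so the factorizations above must be performed in the correct order; in particular, $\tilde L_{k}$ must be factored as $L_{k}\,\Gamma_{k}^{-1}$ on the right and $B_{k}$ as $\Phi_{k}^{-1}U_{k}$ on the left so that the inner block-diagonal pieces multiply into $\gamma_{j-1}^{-1}\phi_{j-1}^{-1}$ and not $\phi_{j-1}^{-1}\gamma_{j-1}^{-1}$. Once indices and the order of factors are handled carefully, no other obstacle remains and the lemma follows.
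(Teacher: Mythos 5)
Your proof is correct, and it reorganizes the argument in a way that differs usefully from the paper's. The paper works column-block by column-block: it substitutes the direction update $p_{j-1}=(r_{j-1}+p_{j-2}\delta_{j-1})\phi_{j-1}$ into $r_j=r_{j-1}-Ap_{j-1}\gamma_{j-1}$, eliminates $Ap_{j-2}$ via the previous residual update, and thus obtains the three-term recurrence for $Ar_{j-1}$ directly, from which \eqref{eq:original} follows; the $LDU$ factorization of $\widehat{T}_k$ is then stated and is checked by a separate (routine) multiplication. You instead encode the two coupled recurrences globally as $AP_k = R_k\,L_k\Gamma_k^{-1}-r_k\gamma_{k-1}^{-1}e_k^T$ and $R_k = P_k\,\Phi_k^{-1}U_k$ (with $\Gamma_k,\Phi_k$ the block diagonals of the $\gamma$'s and $\phi$'s and $L_k,U_k$ the bidiagonal factors of the statement), and multiply them, so that $\widehat{T}_k = L_k\,\Gamma_k^{-1}\Phi_k^{-1}\,U_k$ and the boundary term $e_k^TB_k=\phi_{k-1}^{-1}e_k^T$ appear in one stroke. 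What this buys is that the factorized form of $\widehat{T}_k$ is obtained as a by-product of the derivation rather than verified afterwards, with the tridiagonal entries recovered by multiplying out the three bidiagonal factors; the paper's route gives the tridiagonal entries more immediately but treats the factorization as an add-on. Your attention to the ordering of the noncommuting block-diagonal factors (keeping $\Gamma_k^{-1}$ to the right of $L_k$ and $\Phi_k^{-1}$ to the left of $U_k$, so the middle factor is $\gamma_{j-1}^{-1}\phi_{j-1}^{-1}$) is exactly the point where a careless version would fail, and you handle it correctly; the implicit assumptions (nonsingularity of the $\phi_j$ and $\gamma_j$, $r_0=p_0\phi_0^{-1}$) match those of the paper.
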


\begin{proof}
For a residual block $r_{j}$, $j=1,\dots,k$, we obtain 
\begin{eqnarray*}
r_{j} & = & r_{j-1}-Ap_{j-1}\gamma_{j-1}=r_{j-1}-A(r_{j-1}+p_{j-2}\delta_{j-1})\phi_{j-1}\gamma_{j-1},
\end{eqnarray*}
and, therefore, 
\begin{eqnarray*}
\left(r_{j-1}-r_{j}\right)\gamma_{j-1}^{-1}\phi_{j-1}^{-1} & = & Ar_{j-1}+Ap_{j-2}\delta_{j-1}\\
 & = & Ar_{j-1}+\left(Ap_{j-2}\gamma_{j-2}\right)\gamma_{j-2}^{-1}\delta_{j-1}\\
 & = & Ar_{j-1}+\left(r_{j-2}-r_{j-1}\right)\gamma_{j-2}^{-1}\delta_{j-1},
\end{eqnarray*}
with $r_{-1}\equiv0$, $\delta_{0}\equiv0,\ \gamma_{-1}\equiv I$,
where the zero and identity matrices are of appropriate sizes. This
results in three-term recurrences 
\[
Ar_{j-1}=-r_{j-2}\gamma_{j-2}^{-1}\delta_{j-1}+r_{j-1}\left(\gamma_{j-1}^{-1}\phi_{j-1}^{-1}+\gamma_{j-2}^{-1}\delta_{j-1}\right)-r_{j}\gamma_{j-1}^{-1}\phi_{j-1}^{-1},
\]
which can be written for $j=1,\dots,k$ in matrix form \eqref{eq:original}.
\end{proof}
Note $\widehat{T}_{k}$ from Lemma~\ref{lem:rel1} is nonsymmetric
in general, but the matrix 
\[
R_{k}^{T}AR_{k}=\mathrm{diag}(r_{0}^{T}r_{0},\dots,r_{k-1}^{T}r_{k-1})\widehat{T}_{k}
\]
is symmetric.

\section{The connection\label{sec:Lanczos-CG}}
To show the connection between block CG and block Lanczos algorithms,
we implicitly assume exact arithmetic and full dimension of the corresponding block Krylov subspace.
We first introduce normalization coefficients $\varrho_{j}$ of
size $m\times m$ such that 
\begin{equation}
\varrho_{j}^{-T}r_{j}^{T}r_{j}\varrho_{j}^{-1}=I,\quad\mbox{i.e.},\quad r_{j}^{T}r_{j}=\varrho_{j}^{T}\varrho_{j},\quad j=0,\dots,k.\label{eq:normr}
\end{equation}
As in the block Lanczos algorithm, we have some freedom in determining
these coefficients. In particular, consider the $R$-factor
of the QR factorization of $r_{j}$ with diagonal positive entries
and denote it by $\sigma_{j}$. This factor is uniquely determined
and can also be seen as the transpose of the $L$-factor of the Cholesky
factorization of $r_{j}^{T}r_{j}$. In Matlab notation 
\[
\sigma_{j}=\mathtt{\mathtt{chol}}(r_{j}^{T}r_{j}).
\]
Then, any $\varrho_{j}$ satisfying \eqref{eq:normr} can be parameterized
using 
\[
\varrho_{j}=\theta_{j}^{T}\sigma_{j}
\]
where $\theta_{j}$ is a unitary $m\times m$ matrix to be chosen.
In the following, we will use the freedom in the choice of unitary
$\theta_{j}$ to show the connection between the two block algorithms,
and also to compute the block Lanczos coefficients in the BCG algorithm.

\subsection{Recurrences for the normalized residual blocks}

Using the coefficients $\varrho_{k}$ and $\varrho_{k-1}$, the block
CG coefficients $\delta_{k}$ and $\gamma_{k-1}$ can be written as
\[
\delta_{k}=\phi_{k-1}^{-1}\left(\varrho_{k-1}^{T}\varrho_{k-1}\right)^{-1}\varrho_{k}^{T}\varrho_{k},\qquad\gamma_{k-1}^{-1}=\left(\varrho_{k-1}^{T}\varrho_{k-1}\right)^{-1}\phi_{k-1}^{-T}\left(p_{k-1}^{T}Ap_{k-1}\right).
\]
Let us define normalized residual blocks $\widetilde{v}_{j}$ using
the relation 
\begin{equation}
\widetilde{v}_{j}=\left(-1\right)^{j-1}r_{j-1}\varrho_{j-1}^{-1},\quad j=1,\dots,k+1,\label{eq:defw-1}
\end{equation}
and denote $\widetilde{V}_{k}\equiv[\widetilde{v}_{1},\dots,\widetilde{v}_{k}]$.
Then the following lemma holds.\medskip

\begin{lemma}
\label{lem:rel2}Consider the quantities determined by Algorithm~\ref{alg:BCG}
and let $\varrho_{j-1}$, $j=1,\dots,k+1$, be normalization coefficients
which satisfy \eqref{eq:normr}. Then the normalized BCG residual
blocks $\widetilde{v}_{j}$ defined using \eqref{eq:defw-1} satisfy
\begin{equation}
A\widetilde{V}_{k}=\widetilde{V}_{k}\widetilde{T}_{k}+\widetilde{v}_{k+1}\widetilde{\beta}_{k+1}e_{k}^{T},\label{eq:Wrelation1-1}
\end{equation}
where
\begin{equation}
\widetilde{T}_{k}\equiv\left[\begin{array}{cccc}
\widetilde{\alpha}_{1} & \widetilde{\beta}_{2}^{T}\\
\widetilde{\beta}_{2} & \ddots & \ddots\\
   & \ddots & \ddots & \widetilde{\beta}_{k}^{T}\\
   &  & \widetilde{\beta}_{k} & \widetilde{\alpha}_{k}
\end{array}\right],\label{eq:Ttilde}
\end{equation}
and the coefficients in $\widetilde{T}_{k}$ and the coefficient $\widetilde{\beta}_{k+1}$
are determined by 
\begin{equation}
\widetilde{\alpha}_{1}=\varrho_{0}\gamma_{0}^{-1}\phi_{0}^{-1}\varrho_{0}^{-1}=\widetilde{v}_{1}^{T}A\widetilde{v}_{1},\label{eq:alpha0-1}
\end{equation}
and, for $j=1,\dots,k-1$, by
\begin{eqnarray}
\widetilde{\alpha}_{j+1} & = & \varrho_{j}\gamma_{j}^{-1}\phi_{j}^{-1}\varrho_{j}^{-1}+\varrho_{j}\gamma_{j-1}^{-1}\delta_{j}\varrho_{j}^{-1},\nonumber \\
\widetilde{\beta}_{j+1} & = & \varrho_{j}\gamma_{j-1}^{-1}\phi_{j-1}^{-1}\varrho_{j-1}^{-1},\label{eq:coeff-1}\\
\widetilde{\beta}_{j+1}^{T} & = & \varrho_{j-1}\gamma_{j-1}^{-1}\delta_{j}\varrho_{j}^{-1}.\nonumber 
\end{eqnarray}
The matrix $\widetilde{T}_{k}$ can be factorized as $\widetilde{T}_{k}=\widetilde{L}_{k}\widetilde{D}_{k}\widetilde{L}_{k}^{T}$
with 
\[
\widetilde{L}_{k}=\left[\begin{array}{cccc}
I\\
\widetilde{\ell}_{1} & I\\
 & \ddots & \ddots\\
 &  & \widetilde{\ell}_{k-1} & I
\end{array}\right],\quad\widetilde{D}_{k}=\left[\begin{array}{cccc}
\widetilde{d}_{1}\\
 & \ddots\\
 &  & \ddots\\
 &  &  & \widetilde{d}_{k}
\end{array}\right],
\]
where $\widetilde{\ell}_{j}=\varrho_{j}\varrho_{j-1}^{-1}$, $j=1,\dots,k-1$,
and $\widetilde{d}_{j}=\varrho_{j-1}\gamma_{j-1}^{-1}\phi_{j-1}^{-1}\varrho_{j-1}^{-1}$,
$j=1,\dots,k$.
\end{lemma}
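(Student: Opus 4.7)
The plan is to derive \eqref{eq:Wrelation1-1} by pushing the normalization through the three-term recurrence of Lemma~\ref{lem:rel1}. Introduce the block-diagonal matrix
$\Sigma_k\in\mathbb{R}^{km\times km}$ whose $(j,j)$-block is $(-1)^{j-1}\varrho_{j-1}$ for $j=1,\dots,k$. From the definition \eqref{eq:defw-1} we have $r_{j-1}=(-1)^{j-1}\widetilde v_j\varrho_{j-1}$, so $R_k=\widetilde V_k\Sigma_k$. Substituting into Lemma~\ref{lem:rel1} and postmultiplying by $\Sigma_k^{-1}$ gives
\[
A\widetilde V_k
\;=\;\widetilde V_k\bigl(\Sigma_k\widehat T_k\Sigma_k^{-1}\bigr)\;-\;r_k\gamma_{k-1}^{-1}\phi_{k-1}^{-1}e_k^T\Sigma_k^{-1}.
\]
Using $r_k=(-1)^k\widetilde v_{k+1}\varrho_k$ together with $e_k^T\Sigma_k^{-1}=(-1)^{k-1}\varrho_{k-1}^{-1}e_k^T$, the sign cancels and the trailing rank-one term becomes $\widetilde v_{k+1}\bigl(\varrho_k\gamma_{k-1}^{-1}\phi_{k-1}^{-1}\varrho_{k-1}^{-1}\bigr)e_k^T$, which matches the formula for $\widetilde\beta_{k+1}$ in \eqref{eq:coeff-1}.

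Next I define $\widetilde T_k:=\Sigma_k\widehat T_k\Sigma_k^{-1}$ and compute its block entries using the block-diagonal structure of $\Sigma_k$. Since adjacent diagonal blocks of $\Sigma_k$ carry opposite signs, the two minus signs of the off-diagonal blocks of $\widehat T_k$ are absorbed, producing the subdiagonal block $\varrho_j\gamma_{j-1}^{-1}\phi_{j-1}^{-1}\varrho_{j-1}^{-1}$ and the superdiagonal block $\varrho_{j-1}\gamma_{j-1}^{-1}\delta_j\varrho_j^{-1}$; the diagonal blocks are computed analogously and give exactly $\widetilde\alpha_1$ and $\widetilde\alpha_{j+1}$ in \eqref{eq:alpha0-1}--\eqref{eq:coeff-1}. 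The identity $\widetilde\alpha_1=\widetilde v_1^TA\widetilde v_1$ then follows by using $\widetilde v_1=r_0\varrho_0^{-1}$ together with $r_0=p_0\phi_0^{-1}$ and the BCG formula for $\gamma_0$.

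The main obstacle is showing that the two expressions for $\widetilde\beta_{j+1}$ and $\widetilde\beta_{j+1}^T$ are in fact transposes of each other, i.e.\ that $\widetilde T_k$ is symmetric. I would handle this cleanly by appealing to the standard BCG orthogonality $r_i^Tr_j=0$ for $i\neq j$, which via \eqref{eq:defw-1} and \eqref{eq:normr} yields $\widetilde v_i^T\widetilde v_j=\delta_{i,j}I$; premultiplying \eqref{eq:Wrelation1-1} by $\widetilde V_k^T$ then gives $\widetilde T_k=\widetilde V_k^TA\widetilde V_k$, which is symmetric. Alternatively, a direct computation using the BCG formula $\delta_j=\phi_{j-1}^{-1}(\varrho_{j-1}^T\varrho_{j-1})^{-1}\varrho_j^T\varrho_j$ yields $\varrho_{j-1}\gamma_{j-1}^{-1}\delta_j\varrho_j^{-1}=(\varrho_j\gamma_{j-1}^{-1}\phi_{j-1}^{-1}\varrho_{j-1}^{-1})^T$ after cancellation.

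Finally, for the $\widetilde L_k\widetilde D_k\widetilde L_k^T$ factorization I simply conjugate the factorization of $\widehat T_k$ given in Lemma~\ref{lem:rel1} by $\Sigma_k$: the central factor becomes block diagonal with blocks $\widetilde d_j=\varrho_{j-1}\gamma_{j-1}^{-1}\phi_{j-1}^{-1}\varrho_{j-1}^{-1}$, the left factor becomes unit lower bidiagonal with subdiagonal blocks $\widetilde\ell_j=\varrho_j\varrho_{j-1}^{-1}$, and the right factor, after substituting the explicit form of $\delta_j$, collapses to $\widetilde L_k^T$ as required. Combining these three steps proves the lemma.
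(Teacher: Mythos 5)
Your proposal is correct and follows essentially the same route as the paper: your $\Sigma_k$ is precisely the paper's signed block-diagonal normalization matrix $N_k$, the similarity transformation $\widetilde{T}_k=N_k\widehat{T}_kN_k^{-1}$ of Lemma~\ref{lem:rel1} plus the identification of the trailing term give \eqref{eq:Wrelation1-1}, symmetry is obtained exactly as in the paper via $\widetilde{V}_k^T A\widetilde{V}_k=\widetilde{T}_k$, and the $\widetilde{L}_k\widetilde{D}_k\widetilde{L}_k^T$ factorization is likewise obtained by conjugating the factorization of $\widehat{T}_k$ and using the expression for $\delta_j$ to show the upper factor equals $\widetilde{L}_k^T$. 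The only cosmetic difference is that you verify $\widetilde{\alpha}_1=\widetilde{v}_1^TA\widetilde{v}_1$ and the transpose relation between the off-diagonal blocks by direct substitution, which the paper instead reads off from the symmetry argument.
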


\begin{proof}
Defining the block diagonal normalization matrix 
\[
N_{k}\equiv\mathrm{diag}\left(\varrho_{0},-\varrho_{1},\dots,\left(-1\right)^{k-1}\varrho_{k-1}\right),
\]
the relation \eqref{eq:original} can be transformed to 
\begin{eqnarray}
A\left(R_{k}N_{k}^{-1}\right) & = & \left(R_{k}N_{k}^{-1}\right)(N_{k}\widehat{T}_{k}N_{k}^{-1})-r_{k}\gamma_{k-1}^{-1}\phi_{k-1}^{-1}e_{k}^{T}N_{k}^{-1}.\label{eq:form1}
\end{eqnarray}
Using \eqref{eq:defw-1}, the last term in \eqref{eq:form1} can be
written in the form 
\begin{eqnarray}
-r_{k}\gamma_{k-1}^{-1}\phi_{k-1}^{-1}e_{k}^{T}N_{k}^{-1} & = & \widetilde{v}_{k+1}\left(\varrho_{k}\gamma_{k-1}^{-1}\phi_{k-1}^{-1}\varrho_{k-1}^{-1}\right)e_{k}^{T}.\label{eq:lastterm}
\end{eqnarray}
Therefore, $\widetilde{v}_{j}$ satisfy \eqref{eq:Wrelation1-1} with
the block tridiagonal matrix $\widetilde{T}_{k}=N_{k}\widehat{T}_{k}N_{k}^{-1}$
and $\widetilde{\beta}_{k+1}=\varrho_{k}\gamma_{k-1}^{-1}\phi_{k-1}^{-1}\varrho_{k-1}^{-1}$.
Moreover, since $\widetilde{V}_{k}^{T}\widetilde{v}_{k+1}=0$, we
get $\widetilde{V}_{k}^{T}A\widetilde{V}_{k}=\widetilde{T}_{k},$
and so $\widetilde{T}_{k}$ is symmetric. 

Considering $\widetilde{T}_{k}$ in the form \eqref{eq:Ttilde} and
using Lemma~\ref{lem:rel1}, the relations \eqref{eq:coeff-1} and
\eqref{eq:alpha0-1} hold. 

Finally, the matrix $\widetilde{T}_{k}=N_{k}\widehat{T}_{k}N_{k}^{-1}$ can be factorized as 
\[
\widetilde{T}_{k}=\left(N_{k}\widehat{L}_{k}N_{k}^{-1}\right)\left(N_{k}\widehat{D}_{k}N_{k}^{-1}\right)\left(N_{k}\widehat{U}_{k}N_{k}^{-1}\right)\equiv\widetilde{L}_{k}\widetilde{D}_{k}\widetilde{U}_{k},
\]
where $\widehat{L}_{k}$, $\widehat{D}_{k}$, and $\widehat{U}_{k}$
denote the individual matrices of the factorization of 
$\widehat{T}_{k}$ from Lemma~\ref{lem:rel1}. Using the relation 
\[
\delta_{j}\varrho_{j}^{-1}=\phi_{j-1}^{-1}\left(\varrho_{j-1}^{T}\varrho_{j-1}\right)^{-1}\varrho_{j}^{T},
\]
which follows from expressions for $\widetilde{\beta}_{j+1}$ in \eqref{eq:coeff-1},
we get $\widetilde{U}_{k}=\widetilde{L}_{k}^{T}$, and, therefore,
$\widetilde{T}_{k}=\widetilde{L}_{k}\widetilde{D}_{k}\widetilde{L}_{k}^{T}$
with the block entries $\widetilde{\ell}_{j}=\varrho_{j}\varrho_{j-1}^{-1}$
and $\widetilde{d}_{j}=\varrho_{j-1}\gamma_{j-1}^{-1}\phi_{j-1}^{-1}\varrho_{j-1}^{-1}$.
\end{proof}

\subsection{The connection between $T_{k}$ and $\widetilde{T}_{k}$}

We are now ready to formulate the connection between the matrices
$T_{k}$ and $\widetilde{T}_{k}$ under the assumption that the block
Lanczos and block CG algorithms are started with the same data. \medskip

\begin{theorem}
Suppose that Algorithm~\ref{alg:BLanczos} and Algorithm~\ref{alg:BCG}
are started with the same symmetric and positive definite matrix $A$ and
the same vector $v=r_{0}$. Then 
\[
T_{k}=U_{k}^{T}\widetilde{T}_{k}U_{k},
\]
where $U_{k}=\mathrm{diag}\left(\eta_{1},\eta_{2},\dots,\eta_{k}\right)$,
$\eta_{1}=\theta_{0}^{T}$, and $\eta_{j+1}$ for $j\geq1$ are determined
using
\begin{equation}
\left[\eta_{j+1},\beta_{j+1}\right]=\mathtt{qr}(\widetilde{\beta}_{j+1}\eta_{j}),\quad j=1,\dots,k-1.\label{eq:sigmai}
\end{equation}
\end{theorem}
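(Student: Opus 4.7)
The approach is to identify both $V_k$ and $\widetilde V_k$ as orthonormal bases of the same block Krylov subspace $\mathcal K_k(A,r_0)$ built up incrementally, and to read off the change-of-basis matrix from the two three-term relations.

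First, I would establish that $v_j=\widetilde v_j\eta_j$ for some orthogonal $m\times m$ matrix $\eta_j$, for each $j=1,\dots,k$. This uses two ingredients. On the one hand, both sequences of blocks satisfy $\mathrm{colspan}\{v_1,\dots,v_j\}=\mathrm{colspan}\{\widetilde v_1,\dots,\widetilde v_j\}=\mathcal K_j(A,r_0)$: for the $\widetilde v_j$ this follows by combining the definition \eqref{eq:defw-1} with the identity $\mathrm{colspan}\{r_0,\dots,r_{j-1}\}=\mathcal K_j(A,r_0)$ recalled earlier. On the other hand, $v_j$ and $\widetilde v_j$ both have orthonormal columns that are orthogonal to $\mathcal K_{j-1}(A,r_0)$, so their columns span the same $m$-dimensional orthogonal complement of $\mathcal K_{j-1}$ inside $\mathcal K_j$. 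Hence $v_j=\widetilde v_j\eta_j$, and the condition $v_j^Tv_j=I=\widetilde v_j^T\widetilde v_j$ forces $\eta_j^T\eta_j=I$. Setting $U_k=\mathrm{diag}(\eta_1,\dots,\eta_k)$ yields $V_k=\widetilde V_k U_k$.

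Next I would substitute this into $AV_k=V_kT_k+v_{k+1}\beta_{k+1}e_k^T$, obtaining
\[
A\widetilde V_k=\widetilde V_k(U_kT_kU_k^T)+\widetilde v_{k+1}\,\eta_{k+1}\beta_{k+1}\eta_k^T e_k^T,
\]
where I use $v_{k+1}=\widetilde v_{k+1}\eta_{k+1}$ and the block-diagonal identity $e_k^TU_k^{-1}=\eta_k^Te_k^T$. Comparing with Lemma~\ref{lem:rel2}, namely $A\widetilde V_k=\widetilde V_k\widetilde T_k+\widetilde v_{k+1}\widetilde\beta_{k+1}e_k^T$, and invoking $\widetilde V_k^T\widetilde v_{k+1}=0$ to conclude that $\widetilde T_k$ and the residual term are uniquely determined, I read off both $\widetilde T_k=U_kT_kU_k^T$ (equivalently $T_k=U_k^T\widetilde T_kU_k$) and the identity $\widetilde\beta_{k+1}\eta_k=\eta_{k+1}\beta_{k+1}$. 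Because in Algorithm~\ref{alg:BLanczos} the coefficient $\beta_{k+1}$ is upper triangular with positive diagonal and $\eta_{k+1}$ is orthogonal, this last identity is the (essentially unique) QR factorization of $\widetilde\beta_{k+1}\eta_k$, which is exactly \eqref{eq:sigmai}.

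Finally, I would verify the base case $\eta_1=\theta_0^T$. In block Lanczos, $v_1\beta_1=r_0$ with $\beta_1$ upper triangular of positive diagonal, so $\beta_1$ coincides with $\sigma_0=\mathtt{chol}(r_0^Tr_0)$. On the BCG side, $\widetilde v_1=r_0\varrho_0^{-1}$ with $\varrho_0=\theta_0^T\sigma_0$, so $v_1=r_0\beta_1^{-1}=\widetilde v_1\varrho_0\beta_1^{-1}=\widetilde v_1\theta_0^T$, giving $\eta_1=\theta_0^T$. The main obstacle is the first step: one must justify that the change of basis from $\widetilde V_k$ to $V_k$ is block diagonal rather than merely block upper triangular, which is where the two-sided orthogonality (columns orthonormal, and each new block orthogonal to the previous block Krylov subspace) is indispensable. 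Once that structural fact is established, the remainder is a direct algebraic comparison of the two block tridiagonal recurrences.
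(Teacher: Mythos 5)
Your proposal is correct and follows essentially the same route as the paper: identify $v_j=\widetilde v_j\eta_j$ with unitary $\eta_j$, transform one three-term block relation into the other, read off $T_k=U_k^T\widetilde T_kU_k$ via $\widetilde V_k^T A\widetilde V_k$ (equivalently $V_k^TAV_k$), and recognize $\eta_{j+1}\beta_{j+1}=\widetilde\beta_{j+1}\eta_j$ as a QR factorization, with the same base case $\eta_1=\theta_0^T$. The only difference is cosmetic: you spell out why the change of basis is block diagonal (each block orthonormal and orthogonal to the previous block Krylov subspace), a point the paper states without elaboration.
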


\begin{proof}
Since the columns of $\widetilde{v}_{j}$ and $v_{j}$ span the same
space, there are unitary matrices $\eta_{j}$ such that 
\begin{equation}
v_{j}=\widetilde{v}_{j}\eta_{j},\quad j=1,\dots,k+1.\label{eq:vv}
\end{equation}
Denoting $U_{k}=\mathrm{diag}\left(\eta_{1},\eta_{2},\dots,\eta_{k}\right)$
and using \eqref{eq:vv}, the relation \eqref{eq:Wrelation1-1} can
be transformed to 
\[
AV_{k}=V_{k}(U_{k}^{T}\widetilde{T}_{k}U_{k})+v_{k+1}(\eta_{k+1}^{T}\widetilde{\beta}_{k+1}\eta_{k})e_{k}^{T},
\]
and it holds that 
\[
T_{k}=V_{k}^{T}AV_{k}=U_{k}^{T}\widetilde{T}_{k}U_{k},
\]
or, written in matrix form, 
\[
\left[\begin{array}{cccc}
\alpha_{1} & \beta_{2}^{T}\\
\beta_{2} & \alpha_{2} & \ddots\\
   & \ddots & \ddots & \beta_{k}^{T}\\
   &  & \beta_{k} & \alpha_{k}
\end{array}\right]\,=\,\left[\begin{array}{cccc}
\eta_{1}^{T}\widetilde{\alpha}_{1}\eta_{1} & \left(\eta_{2}^{T}\widetilde{\beta}_{2}\eta_{1}\right)^{T}\\
\eta_{2}^{T}\widetilde{\beta}_{2}\eta_{1} & \eta_{2}^{T}\widetilde{\alpha}_{2}\eta_{2}& \ddots\\
 & \ddots & \ddots & \left(\eta_{k}^{T}\widetilde{\beta}_{k}\eta_{k-1}\right)^{T}\\
 &  & \eta_{k}^{T}\widetilde{\beta}_{k}\eta_{k-1} & \eta_{k}^{T}\widetilde{\alpha}_{k}\eta_{k}
\end{array}\right].
\]

Let us now determine $\eta_{j}$. At the beginning of the algorithms,
the $R$-factors $\beta_{1}$ and $\sigma_{0}$ are equal so that
\[
v_{1}\sigma_{0}=r_{0}=\widetilde{v}_{1}\varrho_{0}=\widetilde{v}_{1}\theta_{0}^{T}\sigma_{0}.
\]
Therefore, $v_{1}=\widetilde{v}_{1}\theta_{0}^{T}$ and $\eta_{1}=\theta_{0}^{T}$.
To determine $\eta_{j+1}$ for $j>0$, we use $\beta_{j+1}=\eta_{j+1}^{T}\widetilde{\beta}_{j+1}\eta_{j}$,
i.e., 
\[
\eta_{j+1}\beta_{j+1}=\widetilde{\beta}_{j+1}\eta_{j}.
\]
Obviously, $\eta_{j+1}\beta_{j+1}$ represents a QR factorization
of $\widetilde{\beta}_{j+1}\eta_{j}$.
\end{proof}
In summary, BCG computes implicitly the block Cholesky factorization
of the block tridiagonal matrix $\widetilde{T}_{k}$ that is unitarily
similar to $T_{k}$. The unitary similarity transformation between
$T_{k}$ and $\widetilde{T}_{k}$ is realized via $U_{k}$ with diagonal
blocks satisfying \eqref{eq:sigmai}.

\subsection{Block Lanczos coefficients in BCG\label{sec:LanCoef} }

We now use the freedom in the choice of $\varrho_{j}$ satisfying
\eqref{eq:normr} to compute the block Lanczos coefficients within the
BCG algorithm. Using results from the previous section, our aim is
to choose $\varrho_{j}$ such that $U_{k}$ discussed above is the
identity matrix, so that $\widetilde{V}_{k}=V_{k}$ and $\widetilde{T}_{k}=T_{k}$;
see \eqref{eq:defw-1} and \eqref{eq:coeff-1}. We summarize our results
in the following theorem. \medskip

\begin{theorem}
\label{thm:Lanczos}Consider the quantities determined by Algorithm~\ref{alg:BCG}
and let $\sigma_{0}=\mathrm{\mathtt{chol}}(r_{0}^{T}r_{0})$, $\beta_{1}=\sigma_{0}$,
$\theta_{0}=I$, and $\ell_{0}=0$. Then, for $j=1,\dots,k$, the Lanczos
coefficients $\alpha_{j}$ and $\beta_{j+1}$ forming $T_{k}$, see
\eqref{eq:blockT}, and the coefficients $\ell_{j}$ and $d_{j}$
from the factorization $T_{k}=L_{k}D_{k}L_{k}^{T}$, 
\[
L_{k}=\left[\begin{array}{cccc}
I\\
\ell_{1} & I\\
 & \ddots & \ddots\\
 &  & \ell_{k-1} & I
\end{array}\right],\quad D_{k}=\left[\begin{array}{cccc}
d_{1}\\
 & \ddots\\
 &  & \ddots\\
 &  &  & d_{k}
\end{array}\right],
\]
can be computed using the recurrences 
\begin{equation}
\begin{split}\tau_{j} & =\gamma_{j-1}^{-1}\phi_{j-1}^{-1}\sigma_{j-1}^{-1}\theta_{j-1},\\
d_{j} & =\theta_{j-1}^{T}\sigma_{j-1}\tau_{j},\\
\alpha_{j} & =d_{j}+\ell_{j-1}\beta_{j}^{T},\\
\sigma_{j} & =\mathtt{chol}(r_{j}^{T}r_{j}),\\{}
[\theta_{j},\beta_{j+1}] & =\mathrm{\mathtt{qr}}(\sigma_{j}\tau_{j}),\\
\ell_{j} & =\theta_{j}^{T}\sigma_{j}\sigma_{j-1}^{-1}\theta_{j-1}.
\end{split}
\label{eq:coeff1}
\end{equation}
The Lanczos block $v_{k+1}$ can be obtained from $r_{k}$ as 
\begin{equation}
v_{k+1}=\left(-1\right)^{k}r_{k}\sigma_{k}^{-1}\theta_{k}^{-T}.\label{eq:Lanvec}
\end{equation}
 
\end{theorem}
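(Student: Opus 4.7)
\emph{Proof plan.} The overall strategy is to use the freedom in the normalization coefficients $\varrho_j$ of Lemma~\ref{lem:rel2} in order to trivialize the unitary similarity between $\widetilde T_k$ and $T_k$ from the preceding theorem. Since $\varrho_j^T\varrho_j=r_j^Tr_j=\sigma_j^T\sigma_j$, every admissible $\varrho_j$ has the form $\varrho_j=\theta_j^T\sigma_j$ for some unitary $\theta_j$. I would choose the $\theta_j$ so that the block diagonal similarity matrix $U_k=\mathrm{diag}(\eta_1,\ldots,\eta_k)$ from the preceding theorem reduces to the identity; then $\widetilde V_k=V_k$, $\widetilde T_k=T_k$, and the block $LDL^T$ factorization of Lemma~\ref{lem:rel2} becomes the advertised $T_k=L_kD_kL_k^T$, so that the recurrences \eqref{eq:coeff1} fall out directly.

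The key computation is to substitute $\varrho_j=\theta_j^T\sigma_j$ into the formulas of Lemma~\ref{lem:rel2}. Introducing $\tau_j:=\gamma_{j-1}^{-1}\phi_{j-1}^{-1}\sigma_{j-1}^{-1}\theta_{j-1}$ gives
\[
\widetilde\beta_{j+1}=\theta_j^T\sigma_j\tau_j,\qquad\widetilde d_j=\theta_{j-1}^T\sigma_{j-1}\tau_j,\qquad\widetilde\ell_j=\theta_j^T\sigma_j\sigma_{j-1}^{-1}\theta_{j-1}.
\]
The initialization $\theta_0=I$, $\beta_1=\sigma_0$ makes $\widetilde v_1=v_1$, i.e., $\eta_1=I$. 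Assume inductively that $\eta_j=I$; the recurrence $[\eta_{j+1},\beta_{j+1}]=\mathtt{qr}(\widetilde\beta_{j+1}\eta_j)$ from the preceding theorem then collapses to a QR of $\widetilde\beta_{j+1}$. If I take $\theta_j$ to be the orthogonal factor in $\sigma_j\tau_j=\theta_j\beta_{j+1}$ with $\beta_{j+1}$ upper triangular and positive diagonal, then $\widetilde\beta_{j+1}=\theta_j^T\sigma_j\tau_j=\beta_{j+1}$ is already upper triangular with positive diagonal, and uniqueness of the QR decomposition with positive diagonal forces $\eta_{j+1}=I$. This simultaneously justifies the step $[\theta_j,\beta_{j+1}]=\mathtt{qr}(\sigma_j\tau_j)$ in \eqref{eq:coeff1} and identifies $\beta_{j+1}$ with the Lanczos subdiagonal block.

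With $\eta_j=I$ established for every $j$, Lemma~\ref{lem:rel2} immediately supplies $d_j=\widetilde d_j=\theta_{j-1}^T\sigma_{j-1}\tau_j$ and $\ell_j=\widetilde\ell_j=\theta_j^T\sigma_j\sigma_{j-1}^{-1}\theta_{j-1}$. Matching the $(j,j)$ block of $T_k=L_kD_kL_k^T$ against the Lanczos form \eqref{eq:blockT}, and using $\beta_j=\ell_{j-1}d_{j-1}$ together with the symmetry of $d_{j-1}$ (a consequence of $T_k$ being symmetric), gives $\ell_{j-1}d_{j-1}\ell_{j-1}^T=\ell_{j-1}\beta_j^T$, hence $\alpha_j=d_j+\ell_{j-1}\beta_j^T$; the convention $\ell_0=0$ covers $j=1$. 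Finally, $\eta_{k+1}=I$ yields $v_{k+1}=\widetilde v_{k+1}=(-1)^k r_k\varrho_k^{-1}=(-1)^k r_k\sigma_k^{-1}\theta_k^{-T}$, which is \eqref{eq:Lanvec}. The main subtlety to watch is that a single QR factorization of $\sigma_j\tau_j$ does double duty: it produces both the unitary $\theta_j$ that cancels the similarity $U_k$ and the Lanczos coefficient $\beta_{j+1}$ in its canonical upper triangular form.
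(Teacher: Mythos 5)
Your proposal is correct and follows essentially the same route as the paper's proof: parameterize $\varrho_j=\theta_j^T\sigma_j$, choose $\theta_j$ as the $Q$-factor of $\sigma_j\tau_j$ so that, by induction and uniqueness of the QR factorization with positive diagonal, all $\eta_j=I$, and then read the coefficients off Lemma~\ref{lem:rel2}. The only (harmless) cosmetic difference is that you obtain $\alpha_j=d_j+\ell_{j-1}\beta_j^T$ by matching the diagonal blocks of $L_kD_kL_k^T$ and invoking the symmetry of $d_{j-1}$, whereas the paper reads it directly from the expressions \eqref{eq:coeff-1}; the two derivations are equivalent.
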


\begin{proof}
The unitary coefficient matrices $\eta_{j}$ forming $U_{k}$ are
determined using the recurrence \eqref{eq:sigmai} started with $\eta_{1}=\theta_{0}^{T}$.
We first choose $\theta_{0}=I$ so that $\eta_{1}=I$. Using \eqref{eq:sigmai}
and assuming that $\eta_{j}=I$, it holds that $\eta_{j+1}=I$ if
and only if $\widetilde{\beta}_{j+1}$ is upper triangular with positive
entries on the diagonal. Hence, our aim is to determine $\varrho_{j}=\theta_{j}^{T}\sigma_{j}$
such that 
\begin{equation}
\widetilde{\beta}_{j+1}=\varrho_{j}\gamma_{j-1}^{-1}\phi_{j-1}^{-1}\varrho_{j-1}^{-1}=\theta_{j}^{T}\sigma_{j}\tau_{j}\label{eq:QRidea}
\end{equation}
has this property, where we defined 
\[
\tau_{j}\equiv\gamma_{j-1}^{-1}\phi_{j-1}^{-1}\varrho_{j-1}^{-1}.
\]
In other words, in \eqref{eq:QRidea} we require that $\theta_{j}\widetilde{\beta}_{j+1}$
is the QR factorization of $\sigma_{j}\tau_{j}$. Hence, by computing
the QR factorization of $\sigma_{j}\tau_{j}$ and assuming that $\eta_{i}=I$,
$i=1,\dots,j$, we get 
\begin{equation}
[\theta_{j},\beta_{j+1}]=\mathrm{\mathtt{qr}}(\sigma_{j}\tau_{j})\label{eq:Qi}
\end{equation}
and $\eta_{j+1}=I$. Note that with the choice \eqref{eq:Qi}, it
holds that $v_{j+1}=(-1)^{j}r_{j}\varrho_{j}^{-1}$, $j=1,\dots,k$.
Using \eqref{eq:coeff-1} and the definition of $\tau_{j}$, the coefficients
$\alpha_{j}$ are given by 
\[
\alpha_{j}=\theta_{j-1}^{T}\sigma_{j-1}\tau_{j}+\varrho_{j-1}\varrho_{j-2}^{-1}\beta_{j}^{T},\quad\alpha_{1}=\sigma_{0}\tau_{1}.
\]
Using Lemma~\ref{lem:rel2}, one can compute the coefficients of
the $LDL^{T}$ factorization of $T_{k}$ as 
\[
\ell_{j}=\varrho_{j}\varrho_{j-1}^{-1}=\theta_{j}^{T}\sigma_{j}\sigma_{j-1}^{-1}\theta_{j-1},\quad\mbox{and}\quad d_{j}=\theta_{j-1}^{T}\sigma_{j-1}\tau_{j}
\]
so that $\alpha_{j}=d_{j}+\ell_{j-1}\beta_{j}^{T}$. Note that since
$\eta_{j}=I$, it holds that $v_{k+1}=\left(-1\right)^{k}r_{k}\sigma_{k}^{-1}\theta_{k}^{-T}.$
\end{proof}
To include the calculation of the Lanczos coefficients in BCG, the
recurrences \eqref{eq:coeff1} with index $k$ can be placed below
line~\ref{line:direction} in Algorithm~\ref{alg:BCG}. The recurrence
\eqref{eq:Lanvec} can also be added to reconstruct the Lanczos blocks
$v_{k+1}$.

\section{Practical block CG variants\label{sec:Practical}}

From the practical point of view, one should consider
the situation when the assumption of the full dimension of the block
Krylov subspace is not satisfied. This question was discussed in detail
for example in \cite{Br1996b,NiYe1995,FrMa1997,BiFr2014,HaYa2017},
and some algorithmic ways were suggested to deal with rank deficiency.
Although rank deficiency can be handled algorithmically through deflation
and variable block size, there are still quite complicated questions
in the air, such as which matrix should be considered rank deficient
when using finite precision arithmetic, or, if we remove some vectors
from the process, how does this affect the convergence of the method
in finite precision arithmetic. In general, it is not obvious whether
deflation ideas derived assuming exact arithmetic are still applicable
in finite precision arithmetic.

When using short recurrences in finite precision arithmetic, we will never have orthogonality and
thus the dimension of the block Krylov subspace under control. Similarly
as in CG, orthogonality among the blocks is usually lost after few
iterations, and the blocks can become linearly dependent. Despite
this fact, we usually observe that BCG converges, but the convergence
can be significantly delayed. Behaviour of classical CG in finite
precision arithmetic is quite well understood thanks to results by
Paige, Greenbaum, and Strako\v{s}; see \cite{Pa1980a,Gr1989,GrSt1992}.
The finite precision behaviour of BCG is not well understood and 
a generalization of the above mentioned results to the block case is an open problem.

From a pragmatic point of view, we should therefore be primarily interested in being able to perform the individual BCG iterations in a numerically stable way, and to preserve at least the local orthogonality  (the orthogonality between consecutive blocks) well.
The local orthogonality influences the delay of convergence and is also important for the estimation of the $A$-norm
of the error; see \cite{StTi2002,StTi2005}. Looking at the BCG coefficients,
\begin{eqnarray}
\gamma_{k-1} & = & \left(p_{k-1}^{T}Ap_{k-1}\right)^{-1}\phi_{k-1}^{T}r_{k-1}^{T}r_{k-1}=\left(p_{k-1}^{T}Ap_{k-1}\right)^{-1}p_{k-1}^{T}r_{k-1},\label{eq:gammaalternative}\\
\delta_{k} & = & \phi_{k-1}^{^{-1}}\left(r_{k-1}^{T}r_{k-1}\right)^{-1}r_{k}^{T}r_{k}=-\left(p_{k-1}^{T}Ap_{k-1}\right)^{-1}p_{k-1}^{T}Ar_{k},\label{eq:deltaalternative}
\end{eqnarray}
for a practical realization of the BCG method it is sufficient to
deal only with the (near) rank deficiency that may occur within the
residual blocks $r_{k-1}$ or the direction blocks $p_{k-1}$, and
that can prevent us from calculating the CG coefficients in a numerically
stable way. Inversions of almost singular matrices can destroy the
quality of the computations.

What we think is the most promising approach to overcome the difficulties
with rank deficiency within blocks was presented in the paper by Dubrulle~\cite{Du2001}.
His idea was to use changes of bases that supplement rank defects
and enforce linear independence. Thanks to this approach, no deflation
is necessary. Dubrulle \cite{Du2001} formulated several variants of
BCG which ensure that the inverses of the matrices used in the algorithm
are well defined during computations in finite precision arithmetic.
Based on our numerical experiments with all variants of BCG described
in \cite{Du2001}, we present and discuss two of them that we
consider to be the most interesting ones. The first one is based on
the idea of computing the QR factorization of the residual block and
then using the Q-factor in the upcoming computations. We call this
variant the Dubrulle-R variant or DR-BCG. The second variant applies
a similar idea to the direction blocks, and we call this variant the
Dubrulle-P variant or DP-BCG.

\subsection{The Dubrulle-R variant\label{subsec:The-Dubrulle-R-variant}}

We will now derive the Dubrulle-R variant (DR-BCG); see \cite[Algorithm BCGrQ]{Du2001}.
Let us consider a QR factorization of $r_{k}$ in the form, 
\[
[w_{k},\sigma_{k}]=\mathrm{\mathtt{qr}}(r_{k}),
\]
implemented using Householder QR, with positive entries on the diagonal
of $\sigma_{k}$ (because of consistency with notation in Section~\ref{sec:Lanczos-CG}).
Using Householder QR ensures that $w_{k}$ has orthonormal columns
even if $r_{k}$ has linearly dependent columns.

To formally derive DR-BCG, we assume $\sigma_{k}$ to be nonsingular.
Let us define 
\[
\phi_{k}\equiv\sigma_{k}^{-1}\sigma_{k-1}^{-1}
\]
in Algorithm \ref{alg:BCG}, and denote $s_{k}=p_{k}\sigma_{k-1}$
with $\sigma_{-1}=I$. Then 
\begin{eqnarray*}
\gamma_{k-1} & = & \left(p_{k-1}^{T}Ap_{k-1}\right)^{-1}\phi_{k-1}^{T}r_{k-1}^{T}r_{k-1}=\sigma_{k-2}\left(s_{k-1}^{T}As_{k-1}\right)^{-1}\sigma_{k-1},\\
\delta_{k} & = & \phi_{k-1}^{-1}\left(r_{k-1}^{T}r_{k-1}\right)^{-1}r_{k}^{T}r_{k}=\sigma_{k-2}\sigma_{k-1}^{-T}\sigma_{k}^{T}\sigma_{k},
\end{eqnarray*}
and $x_{k}$, $w_{k}$, and $s_{k}$ satisfy the recurrences 
\begin{eqnarray*}
x_{k} & = & x_{k-1}+s_{k-1}\left(s_{k-1}^{T}As_{k-1}\right)^{-1}\sigma_{k-1},\\
w_{k}\sigma_{k}\sigma_{k-1}^{-1} & = & w_{k-1}-As_{k-1}\left(s_{k-1}^{T}As_{k-1}\right)^{-1},\\
s_{k} & = & w_{k}+s_{k-1}\sigma_{k-1}^{-T}\sigma_{k}^{T}.
\end{eqnarray*}
Denoting $\xi_{k-1}=\left(s_{k-1}^{T}As_{k-1}\right)^{-1}$ and $\zeta_{k}=\sigma_{k}\sigma_{k-1}^{-1}$
we obtain Algorithm~\ref{alg:DRBCG}.

\begin{algorithm}[th]
\caption{Dubrulle-R BCG (DR-BCG)}
\label{alg:DRBCG}

\begin{algorithmic}[1]

\State \textbf{input} $A$, $b$, $x_{0}$

\State $r_{0}=b-Ax_{0}$

\State $[w_{0},\sigma_{0}]=\mathrm{\mathtt{qr}}(r_{0})$

\State $s_{0}=w_{0}$

\For{$k=1,2,\dots$}

\State $\xi_{k-1}=\left(s_{k-1}^{T}As_{k-1}\right)^{-1}$

\State $x_{k}=x_{k-1}+s_{k-1}\xi_{k-1}\sigma_{k-1}$

\State $[w_{k},\zeta_{k}]=\mathrm{\mathtt{qr}}(w_{k-1}-As_{k-1}\xi_{k-1})$

\State $s_{k}=w_{k}+s_{k-1}\zeta_{k}^{T}$

\State $\sigma_{k}=\zeta_{k}\sigma_{k-1}$

\EndFor

\end{algorithmic} 
\end{algorithm}

We derived this algorithm assuming that $\sigma_{k}$ is nonsingular,
but, as one can observe, we do not need the nonsingularity of $\sigma_{k}$
in Algorithm~\ref{alg:DRBCG}. The only assumption that we need
is that $s_{k}^{T}As_{k}$ is nonsingular so that the coefficient
$\xi_{k}$ is well defined.

The residual blocks $r_{k}$ are not available in Algorithm~\ref{alg:DRBCG},
but we can always reconstruct them. Using the recurrence for updating
$x_{k}$, we obtain 
\[
r_{k}=r_{k-1}-As_{k-1}\xi_{k-1}\sigma_{k-1}.
\]
Using the induction hypothesis $r_{k-1}=w_{k-1}\sigma_{k-1}$ we find
out that 
\[
r_{k}=\left(w_{k-1}-As_{k-1}\xi_{k-1}\right)\sigma_{k-1}=w_{k}\zeta_{k}\sigma_{k-1}=w_{k}\sigma_{k}.
\]
In the following theorem we show how to compute the
block Lanczos coefficients in DR-BCG. To be mathematically correct,
we implicitly assume, as in Sections \ref{sec:Block-Lanczos-and}
and~\ref{sec:Lanczos-CG}, exact arithmetic and full dimension
of the corresponding block Krylov subspace.\medskip

\begin{theorem}
Consider the quantities determined by Algorithm~\ref{alg:DRBCG}
and let $\beta_{1}=\sigma_{0}$, $\theta_{0}=I$, and $\ell_{0}=0$.
Then, for $j=1,\dots,k$, the Lanczos coefficients $\alpha_{j}$ and
$\beta_{j+1}$ forming the matrix $T_{k}$ and the coefficients $\ell_{j}$
and $d_{j}$ used in $T_{k}=L_{k}D_{k}L_{k}^{T}$ can be computed
using the recurrences 
\begin{eqnarray}
\tilde{\tau}_{k} & = & \xi_{k-1}^{-1}\theta_{k-1},\nonumber \\
d_{k} & = & \theta_{k-1}^{T}\tilde{\tau}_{k},\nonumber \\
\alpha_{k} & = & d_{k}+\ell_{k-1}\beta_{k}^{T},\label{eq:coeffR}\\{}
[\theta_{k},\beta_{k+1}] & = & \mathrm{\mathtt{qr}}(\zeta_{k}\tilde{\tau}_{k}),\nonumber \\
\ell_{k} & = & \theta_{k}^{T}\zeta_{k}\theta_{k-1}.\nonumber 
\end{eqnarray}
The Lanczos block $v_{k+1}$ can be obtained from $w_{k}$ as $v_{k+1}=\left(-1\right)^{k}w_{k}\theta_{k}^{-T}.$ 
\end{theorem}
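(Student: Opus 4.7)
The plan is to derive this theorem as a direct specialization of Theorem~\ref{thm:Lanczos}, by recognizing that DR-BCG is the instance of Algorithm~\ref{alg:BCG} with the choice $\phi_k = \sigma_k^{-1}\sigma_{k-1}^{-1}$ used in Section~\ref{subsec:The-Dubrulle-R-variant}. All DR-BCG quantities $\xi_{k-1}$, $\zeta_k$, $w_k$, $\sigma_k$ are related to the underlying OL-BCG quantities through the substitutions $s_k = p_k \sigma_{k-1}$, $\gamma_{k-1} = \sigma_{k-2}\xi_{k-1}\sigma_{k-1}$, $\zeta_k = \sigma_k \sigma_{k-1}^{-1}$, and $r_k = w_k \sigma_k$ derived in that subsection. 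Since $\sigma_k$ is the Householder-QR $R$-factor with positive diagonal, it coincides with $\mathtt{chol}(r_k^T r_k)$, so the hypotheses of Theorem~\ref{thm:Lanczos} are met and the formulas \eqref{eq:coeff1} apply.

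The core of the proof is then to translate each line of \eqref{eq:coeff1} into the DR-BCG notation. First, one computes
\[
\gamma_{k-1}^{-1}\phi_{k-1}^{-1}\sigma_{k-1}^{-1} = \bigl(\sigma_{k-1}^{-1}\xi_{k-1}^{-1}\sigma_{k-2}^{-1}\bigr)\bigl(\sigma_{k-2}\sigma_{k-1}\bigr)\sigma_{k-1}^{-1} = \sigma_{k-1}^{-1}\xi_{k-1}^{-1},
\]
so the quantity $\tau_k$ of Theorem~\ref{thm:Lanczos} satisfies $\sigma_{k-1}\tau_k = \xi_{k-1}^{-1}\theta_{k-1} =: \tilde\tau_k$, which establishes the first line of \eqref{eq:coeffR} and immediately yields $d_k = \theta_{k-1}^T\sigma_{k-1}\tau_k = \theta_{k-1}^T\tilde\tau_k$. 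The formula for $\alpha_k$ is identical in both theorems. The QR factorization in \eqref{eq:coeff1} has argument $\sigma_k\tau_k = (\sigma_k\sigma_{k-1}^{-1})(\sigma_{k-1}\tau_k) = \zeta_k\tilde\tau_k$, giving the stated formula for $[\theta_k,\beta_{k+1}]$. Finally, $\ell_k = \theta_k^T\sigma_k\sigma_{k-1}^{-1}\theta_{k-1} = \theta_k^T\zeta_k\theta_{k-1}$. For the Lanczos block, substituting $r_k = w_k\sigma_k$ into the formula \eqref{eq:Lanvec} of Theorem~\ref{thm:Lanczos} gives $v_{k+1} = (-1)^k w_k \sigma_k \sigma_k^{-1}\theta_k^{-T} = (-1)^k w_k \theta_k^{-T}$.

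The only subtle point — and the main bookkeeping obstacle — is to verify that all occurrences of the auxiliary matrices $\sigma_{k-2}$ that appear through $\gamma_{k-1}$ and $\phi_{k-1}$ cancel cleanly, so that the resulting recurrences of \eqref{eq:coeffR} involve only quantities available in DR-BCG (namely $\xi_{k-1}$, $\zeta_k$, and $\theta_{k-1}$). One also needs to check that the base case is consistent: with $\beta_1 = \sigma_0$, $\theta_0 = I$, $\ell_0 = 0$ and the convention $\sigma_{-1} = I$ introduced in Section~\ref{subsec:The-Dubrulle-R-variant}, the recursion for $k=1$ matches Theorem~\ref{thm:Lanczos} (where the $\ell_{k-1}\beta_k^T$ term vanishes for $k=1$ via $\ell_0 = 0$). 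Once these index alignments are verified, the theorem follows by a direct substitution argument rather than any genuinely new derivation.
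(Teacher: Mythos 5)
Your proposal is correct and follows essentially the same route as the paper: both specialize the recurrences \eqref{eq:coeff1} of Theorem~\ref{thm:Lanczos} to the choice $\phi_{k}=\sigma_{k}^{-1}\sigma_{k-1}^{-1}$, use $\gamma_{k-1}=\sigma_{k-2}\xi_{k-1}\sigma_{k-1}$ to obtain $\sigma_{k}\tau_{k}=\zeta_{k}\xi_{k-1}^{-1}\theta_{k-1}$, set $\tilde{\tau}_{k}=\sigma_{k-1}\tau_{k}$, and substitute $r_{k}=w_{k}\sigma_{k}$ for the Lanczos block. Your extra verifications (the clean cancellation of $\sigma_{k-2}$, the identification of the Householder QR $R$-factor with $\mathtt{chol}(r_{k}^{T}r_{k})$, and the base case with $\sigma_{-1}=I$) are details the paper leaves implicit, but they do not change the argument.
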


\begin{proof}
To derive formulas for computing the Lanczos coefficients in Algorithm~\ref{alg:DRBCG},
we can use the relations \eqref{eq:coeff1} from Section~\ref{sec:LanCoef}
with the choice $\phi_{k}=\sigma_{k}^{-1}\sigma_{k-1}^{-1}$. In DR-BCG,
the $\sigma_{k}$'s are already available and need not be computed. Using 
\[
\phi_{k}=\sigma_{k}^{-1}\sigma_{k-1}^{-1}\quad\mbox{and}\quad\gamma_{k-1}=\sigma_{k-2}\xi_{k-1}\sigma_{k-1}
\]
we obtain 
$
\sigma_{k}\tau_{k}=\zeta_{k}\xi_{k-1}^{-1}\theta_{k-1}.
$
Denoting $\tilde{\tau}_{k}\equiv\sigma_{k-1}\tau_{k}$ we obtain the
relations \eqref{eq:coeffR}. Finally, the relation for $v_{k+1}$
follows from $v_{k+1}=\left(-1\right)^{k}r_{k}\sigma_{k}^{-1}\theta_{k}^{-T}=\left(-1\right)^{k}w_{k}\theta_{k}^{-T}.$
\end{proof}
Looking only at the final formulas~\eqref{eq:coeffR}, we do not need to assume that
$\sigma_{k}$ is nonsingular in order to compute the Lanczos coefficients
and blocks. Thus, DR-BCG corresponds to a version of the block Lanczos
algorithm that does not require deflation and allows the off-diagonal
blocks of $T_{k}$ to be singular. These variants deserve more attention
and a deeper analysis, which is beyond the scope of this paper.

\subsection{The Dubrulle-P variant\label{subsec:The-Dubrulle-P-variant}}

To derive the Dubrulle-P variant (DP-BCG), see \cite[Algorithm BCGAdQ]{Du2001},
we will consider an alternative way of computing coefficients $\gamma_{k-1}$
and $\delta_{k}$ in BCG; see \eqref{eq:gammaalternative} and \eqref{eq:deltaalternative}.
For stable computations, we only need the inverse of the matrix
$p_{k-1}^{T}Ap_{k-1}$ to be well defined. The formulas \eqref{eq:gammaalternative}
and \eqref{eq:deltaalternative} appear already in the Hestenes and
Stiefel paper \cite{HeSt1952} for classical CG, and follow easily
from the formulas in the paper by O'Leary \cite{OL1980}. They are
also used to derive a version of BCG in \cite{HaYa2017} that will
be discussed in the numerical experiments section. Using \eqref{eq:gammaalternative}
and \eqref{eq:deltaalternative}, the coefficient matrix $\phi_{k}$
which is free to choose appears only at line \ref{line:direction}
of Algorithm~\ref{alg:BCG}. Let us consider a QR factorization of
$r_{k}+p_{k-1}\delta_{k}$, 
\[
[p_{k},\psi_{k}]=\mathrm{\mathtt{qr}}(r_{k}+p_{k-1}\delta_{k}).
\]
Note that to implement the above QR factorization we again use Householder
QR, which ensures that $p_{k}$ has orthonormal columns even if $r_{k}+p_{k-1}\delta_{k}$
has linearly dependent columns. To formally derive the algorithm,
we will assume that $\psi_{k}$ is nonsingular. Let us define 
\[
\phi_{k}^{-1}\equiv\psi_{k}
\]
in Algorithm~\ref{alg:BCG}, with coefficients $\gamma_{k-1}$ and
$\delta_{k}$ computed using the alternative formulas \eqref{eq:gammaalternative}
and \eqref{eq:deltaalternative}. Then we obtain Algorithm~\ref{alg:DPBCG}.

\begin{algorithm}[th]
\caption{Dubrulle-P BCG (DP-BCG)}
\label{alg:DPBCG}

\begin{algorithmic}[1]

\State \textbf{input} $A$, $b$, $x_{0}$

\State $r_{0}=b-Ax_{0}$

\State $[p_{0},\psi_{0}]=\mathrm{\mathtt{qr}}(r_{0})$

\For{$k=1,2,\dots$}

\State $\gamma_{k-1}=\left(p_{k-1}^{T}Ap_{k-1}\right)^{-1}p_{k-1}^{T}r_{k-1}$

\State $x_{k}=x_{k-1}+p_{k-1}\gamma_{k-1}$

\State $r_{k}=r_{k-1}-Ap_{k-1}\gamma_{k-1}$

\State $\delta_{k}=-\left(p_{k-1}^{T}Ap_{k-1}\right)^{-1}p_{k-1}^{T}Ar_{k}$

\State $[p_{k},\psi_{k}]=\mathrm{\mathtt{qr}}(r_{k}+p_{k-1}\delta_{k})$

\EndFor

\end{algorithmic} 
\end{algorithm}

Using Theorem~\ref{thm:Lanczos} and the particular choice of $\phi_{k}$,
the following theorem follows.\medskip

\begin{theorem}
Consider the quantities determined by Algorithm~\ref{alg:DPBCG}
and let $\sigma_{0}=\psi_{0}$, $\beta_{1}=\sigma_{0}$, $\theta_{0}=I$,
and $\ell_{0}=0$. Then, for $j=1,\dots,k$, the Lanczos coefficients
$\alpha_{j}$ and $\beta_{j+1}$ forming the matrix $T_{k}$ and the
coefficients $\ell_{j}$ and $d_{j}$ used in $T_{k}=L_{k}D_{k}L_{k}^{T}$
can be computed using the recurrences
\begin{eqnarray}
\tau_{k} & = & \gamma_{k-1}^{-1}\psi_{k-1}\sigma_{k-1}^{-1}\theta_{k-1}\nonumber \\
d_{k} & = & \theta_{k-1}^{T}\sigma_{k-1}\tau_{k}\nonumber \\
\alpha_{k} & = & d_{k}+\ell_{k-1}\beta_{k}^{T}\label{eq:coeffP}\\
\sigma_{k} & = & \mathtt{chol}(\psi_{k}^{T}p_{k}^{T}r_{k})\nonumber \\{}
[\theta_{k},\beta_{k+1}] & = & \mathrm{\mathtt{qr}}(\sigma_{k}\tau_{k})\nonumber \\
\ell_{k} & = & \theta_{k}^{T}\sigma_{k}\sigma_{k-1}^{-1}\theta_{k-1}.\nonumber 
\end{eqnarray}
The Lanczos block $v_{k+1}$ can be reconstructed using $v_{k+1}=\left(-1\right)^{k}r_{k}\sigma_{k}^{-1}\theta_{k}^{-T}.$
\end{theorem}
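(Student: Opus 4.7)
The plan is to deduce the result from Theorem~\ref{thm:Lanczos} by specializing the free scaling $\phi_{k}$ in Algorithm~\ref{alg:BCG} to the choice used in DP-BCG, namely $\phi_{k}^{-1} = \psi_{k}$ together with the alternative expressions \eqref{eq:gammaalternative}--\eqref{eq:deltaalternative} for $\gamma_{k-1}$ and $\delta_{k}$. With this substitution, the first line $\tau_{k} = \gamma_{k-1}^{-1}\phi_{k-1}^{-1}\sigma_{k-1}^{-1}\theta_{k-1}$ of \eqref{eq:coeff1} becomes $\tau_{k} = \gamma_{k-1}^{-1}\psi_{k-1}\sigma_{k-1}^{-1}\theta_{k-1}$, which is the first formula in \eqref{eq:coeffP}. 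The remaining formulas for $d_{k}$, $\alpha_{k}$, $[\theta_{k},\beta_{k+1}]$, and $\ell_{k}$, as well as the reconstruction $v_{k+1} = (-1)^{k}r_{k}\sigma_{k}^{-1}\theta_{k}^{-T}$, follow by pure rewriting from the corresponding formulas of Theorem~\ref{thm:Lanczos}; no new argument is needed for them.

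The only nontrivial point is that Theorem~\ref{thm:Lanczos} requires $\sigma_{k} = \mathtt{chol}(r_{k}^{T}r_{k})$, but DP-BCG does not form this Cholesky factor directly: the residual block $r_{k}$ is available, yet we would prefer a formula that uses quantities already computed by the algorithm. The plan here is to start from the defining QR factorization $p_{k}\psi_{k} = r_{k} + p_{k-1}\delta_{k}$, left-multiply its transpose by $r_{k}$, and use the standard BCG residual orthogonality $p_{k-1}^{T}r_{k} = 0$ (which holds under the full-dimension assumption, since $r_{k}\perp\mathcal{K}_{k}(A,r_{0}) \supseteq \mathrm{colspan}(p_{k-1})$). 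This yields
\[
\psi_{k}^{T}p_{k}^{T}r_{k} \;=\; (r_{k}+p_{k-1}\delta_{k})^{T}r_{k} \;=\; r_{k}^{T}r_{k} + \delta_{k}^{T}p_{k-1}^{T}r_{k} \;=\; r_{k}^{T}r_{k},
\]
so that $\sigma_{k} = \mathtt{chol}(\psi_{k}^{T}p_{k}^{T}r_{k})$ as claimed in \eqref{eq:coeffP}.

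The main obstacle is precisely this identity; everything else is mechanical substitution. Once it is verified, the Cholesky factor $\sigma_{k}$ used inside Theorem~\ref{thm:Lanczos} is expressed entirely in terms of $\psi_{k}$, $p_{k}$, and $r_{k}$ that are produced by Algorithm~\ref{alg:DPBCG}, and the chain $\tau_{k} \to d_{k} \to \alpha_{k} \to \sigma_{k} \to (\theta_{k},\beta_{k+1}) \to \ell_{k}$ in \eqref{eq:coeffP} is exactly the chain \eqref{eq:coeff1} of Theorem~\ref{thm:Lanczos} specialized to $\phi_{k-1}^{-1}=\psi_{k-1}$. The relation for $v_{k+1}$ is then literally the one from Theorem~\ref{thm:Lanczos}, with no modification needed.
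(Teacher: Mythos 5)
Your proposal is correct and follows essentially the same route as the paper: specialize the recurrences \eqref{eq:coeff1} of Theorem~\ref{thm:Lanczos} to the choice $\phi_{k}^{-1}=\psi_{k}$ and replace $r_{k}^{T}r_{k}$ by $\psi_{k}^{T}p_{k}^{T}r_{k}$ via the identity $\psi_{k}^{T}p_{k}^{T}r_{k}=r_{k}^{T}r_{k}$, starting from $\sigma_{0}=\psi_{0}$. Your explicit verification of that identity from $p_{k}\psi_{k}=r_{k}+p_{k-1}\delta_{k}$ and $p_{k-1}^{T}r_{k}=0$ only fills in a detail the paper states without proof.
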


\begin{proof}
To compute the Lanczos coefficients in Algorithm~\ref{alg:DPBCG},
we can again use the relations \eqref{eq:coeff1} from Section~\ref{sec:LanCoef},
now with the choice $\phi_{k}\equiv\psi_{k}^{-1}$. Since $r_{k}^{T}r_{k}$
is not computed in Algorithm~\ref{alg:DPBCG} any more, we can
use the relation $\psi_{k}^{T}p_{k}^{T}r_{k}=r_{k}^{T}r_{k}$ to compute
$\sigma_{k}$. Starting with $\sigma_{0}=\psi_{0}$, we obtain the
relations \eqref{eq:coeffP}.
\end{proof}
If we want to compute the Lanczos coefficients within DP-BCG, we have
to assume that the columns of $r_{k}$ are linearly independent, so
that the inverse of $\sigma_{k}$ is well defined. In other words,
we have to assume that the matrices $\psi_{k}$ are nonsingular. This
requirement is natural here because in DP-BCG, the columns of a residual
block may be linearly dependent, in which case the corresponding Lanczos
block does not exist.

\section{Practical issues \label{sec:Practical-solving} }

In practical computations, it is convenient to make several modifications
to those BCG variants. First, to speed up convergence, we need to
introduce preconditioning. Second, one should think about stopping
criteria for each system. Once we have sufficiently
accurate approximate solutions for some of the systems, it would be
beneficial to remove the corresponding systems from the algorithm (reduce
the size of the blocks) to save computational resources. However,
the subject of removing systems from the BCG process is beyond the
scope of this paper.

\subsection{Preconditioning}

Suppose we are given a symmetric positive definite preconditioner~$M$.
Algorithms for preconditioned block CG can already be found in the
original work of O'Leary~\cite{OL1980}. They can
be derived in the standard way by implicit application of block CG
to the preconditioned system $L^{-1}AL^{-T}y=L^{-1}b$, where $L$ is a nonsingular
matrix such that $LL^{T}=M$. Then, the solutions of $Ax=b$ and the
preconditioned system are related using $x=L^{-T}y$, and one can use
this transformation to define the approximate solution $x_{k}=L^{-T}y_{k}$.

In the single-vector versions of CG, the splitting $LL^{T}$ is needed to
derive the algorithm, but then we just need to solve linear systems
with $M$ in the preconditioned algorithm. In the block versions of
CG, the situation can be different, in particular for DR-BCG; see
Algorithm~\ref{alg:PDRBCG}.

\begin{algorithm}[th]
\caption{Preconditioned DR-BCG}
\label{alg:PDRBCG}

\begin{algorithmic}[1]

\State \textbf{input} $A$, $b$, $x_{0}$, $M=LL^{T}$

\State $r_{0}=b-Ax_{0}$

\State $[w_{0},\sigma_{0}]=\mathrm{\mathtt{qr}}(L^{-1}r_{0})$

\State $s_{0}=L^{-T}w_{0}$

\For{$k=1,2,\dots$}

\State $\xi_{k-1}=\left(s_{k-1}^{T}As_{k-1}\right)^{-1}$

\State \label{line:xkdr}$x_{k}=x_{k-1}+s_{k-1}\xi_{k-1}\sigma_{k-1}$

\State $[w_{k},\zeta_{k}]=\mathrm{\mathtt{qr}}(w_{k-1}-L^{-1}As_{k-1}\xi_{k-1})$

\State $s_{k}=L^{-T}w_{k}+s_{k-1}\zeta_{k}^{T}$

\State $\sigma_{k}=\zeta_{k}\sigma_{k-1}$

\EndFor

\end{algorithmic} 
\end{algorithm}

To derive preconditioned DR-BCG, we need first to apply formally the
DR-BCG algorithm to the preconditioned system. If we do so in
the standard manner, we need to compute QR factorizations
of the preconditioned residuals $Lr_{k}$. So the preconditioned residual
should also be available in the preconditioned algorithm,
and to compute it we need the matrix $L$. Therefore,
to derive the standard version of preconditioned DR-BCG, we assume
that a split preconditioner $M=LL^{T}$ is available. Note that it
is possible to derive a preconditioned version of DR-BCG that does
not require a split preconditioner. However, it would require computing
the QR factorization of a block with $Q$ having $M^{-1}$-orthogonal
columns. There are several ways to do this. An algorithm that uses
Householder-like transformations and appears to be stable has been
proposed by Shao \cite{Sh2023}. However, this algorithm uses multiple
products of the matrix defining the inner product with some vectors.
In our case, this corresponds to solving linear systems with the preconditioner
$M$ several times in one iteration, and this can be very expensive.
Nevertheless, it seems to work well numerically, as predicted by our
unpublished experiments.

\begin{algorithm}[th]
\caption{Preconditioned DP-BCG}
\label{alg:PDPBCG}

\begin{algorithmic}[1]

\State \textbf{input} $A$, $b$, $x_{0}$, $M$

\State $r_{0}=b-Ax_{0}$

\State $z_{0}=M^{-1}r_{0}$

\State $[p_{0},\psi_{0}]=\mathrm{\mathtt{qr}}(z_{0})$

\For{$k=1,2,\dots$}

\State $\gamma_{k-1}=\left(p_{k-1}^{T}Ap_{k-1}\right)^{-1}p_{k-1}^{T}z_{k-1}$

\State \label{line:xkdp}$x_{k}=x_{k-1}+p_{k-1}\gamma_{k-1}$

\State \label{line:rkdp} $r_{k}=r_{k-1}-Ap_{k-1}\gamma_{k-1}$

\State \label{line:rkdp-1} $z_{k}=M^{-1}r_{k}$

\State $\delta_{k}=-\left(p_{k-1}^{T}Ap_{k-1}\right)^{-1}p_{k-1}^{T}Az_{k}$

\State \label{line:dpp}$[p_{k},\psi_{k}]=\mathrm{\mathtt{qr}}(z_{k}+p_{k-1}\delta_{k})$

\EndFor

\end{algorithmic} 
\end{algorithm}

To precondition the DP-BCG algorithm, we can use standard techniques; see Algorithm~\ref{alg:PDPBCG}.
Here, the splitting $LL^{T}=M$ does not need to be available, it
is only used for the derivation of the algorithm.

\subsection{Stopping the algorithms}

To stop the algorithms, one can use the individual squared residual
norms $\|r_k^{(i)}\|^2$,  i.e., the diagonal entries of $r_{k}^{T}r_{k}=\sigma_{k}^{T}\sigma_{k}.$
However, as in classical CG, we can do better. In the block case,
it is possible to estimate the diagonal entries of 
\[
\left(x-x_{k}\right)^{T}A\left(x-x_{k}\right),
\]
i.e., the squared $A$-norm of the error $\|x^{(i)}-x_{k}^{(i)}\|_{A}^{2}$
of the individual systems. The details will be explained in a forthcoming
paper, where we generalize the formulas for computing the lower and
upper bounds, see, e.g, \cite{B-MeTi2024}, also for the block CG
algorithms. Note that to compute the upper bounds, we need to know an underestimate of the smallest eigenvalue of the preconditioned
matrix.

\section{Numerical experiments \label{sec:Numerical-experiments}}

The experiments are performed in MATLAB 9.14 (R2023a). We consider
the matrices \texttt{bcsstk03} and \texttt{s3dkt3m2} from the SuiteSparse\footnote{https://sparse.tamu.edu}
matrix collection. In particular, we use the matrix \texttt{bcsstk03}
of size $n=112$ and $\kappa(A)\approx9.5\times10^{6}$ to test the
algorithms without preconditioning, and the matrix \texttt{s3dkt3m2}
of size $n=90449$ and $\kappa(A)\approx3.6\times10^{11}$ to test
the problems with preconditioning. The factor $L$ in the preconditioner
$M=LL^{T}$ is determined by the incomplete Cholesky (\texttt{ichol})
factorization with threshold dropping, \texttt{type = 'ict'}, \texttt{droptol
= 1e-5}, and with the global diagonal shift \texttt{diagcomp = 1e-2}.
We focus on numerical behaviour of the three considered variants of
block CG with increasing block size $m$, in particular on HS-BCG
(Algorithm~\ref{alg:BCG} with $\phi_{k}=I$), 
DR-BCG (Algorithm~\ref{alg:PDRBCG}), and DP-BCG (Algorithm~\ref{alg:PDPBCG}). The block right-hand
sides are generated randomly using the \texttt{rand(n,m)} command.
We start the algorithms with the initial guess $x_{0}=0\in\mathbb{R}^{n\times m}$
and plot the convergence characteristic
\[
\omega_{k}\equiv\left(\frac{\mathrm{\mathrm{trace}}(\left(x-x_{k}\right)^{T}A\left(x-x_{k}\right))}{\mathrm{\mathrm{trace}}(x^{T}Ax)}\right)^{1/2},
\]
which is an analogue of the relative $A$-norm of
the error in the standard PCG case.

\begin{figure}[t]
\centering{}\includegraphics[width=\textwidth]{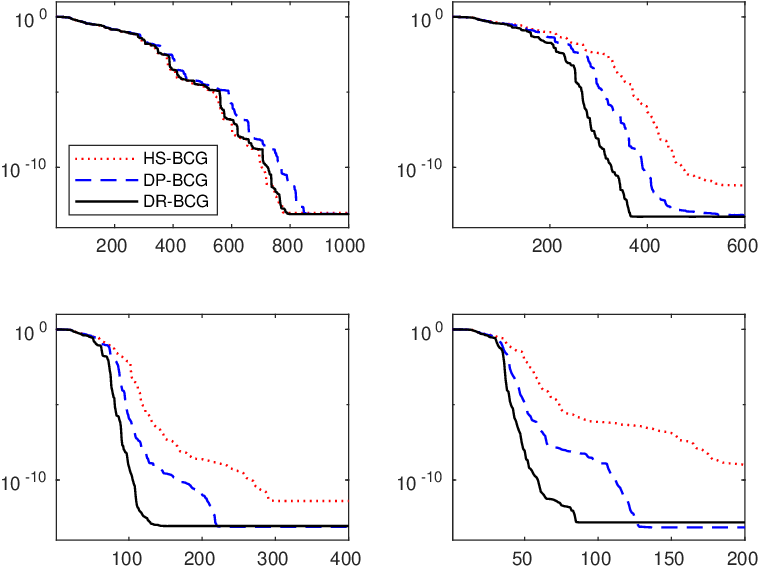} \caption{The convergence characteristic $\omega_{k}$ in HS-BCG (red
dotted), DP-BCG (blue dashed), and DR-BCG (black solid) for bcsstk03
and $b\in\mathbb{R}^{n\times m}$, $m=1$ (top left), $m=2$ (top
right), $m=4$ (bottom left), $m=6$ (bottom right).}
\label{fig-bcsstk03}
\end{figure}

In \figurename~\ref{fig-bcsstk03} we observe that for the matrix \texttt{bcsstk03}
and a single right-hand side, HS-BCG (red dotted) and DR-BCG (black
solid) produce almost the same convergence curves while DP-BCG (blue
dashed) is slightly delayed (top left part). 
For $m = 1$, all tested versions of block CG 
reach the same level of maximum attainable accuracy.
As we increase the block size to $m=2,4,6$ we can see that the picture
changes significantly. We can see that DR-BCG is
the winner both in terms of the speed of convergence as well as in
terms of the maximum accuracy that can be achieved. The convergence
of DP-BCG is delayed, but with increasing iterations it usually reaches
the same level of maximum attainable accuracy as DR-BCG. The convergence
of the HS-BCG version with no deflation is significantly
delayed compared to the other two versions tested, and the level of
maximum attainable accuracy is the worst. Note that during the MATLAB
computations, a warning that a matrix is nearly
singular or badly scaled was displayed several times during the HS-BCG
computations, on lines related to the computation of the coefficients
$\gamma_{k-1}$ and $\delta_{k}$.

\begin{figure}[t]
\centering{}\includegraphics[width=\textwidth]{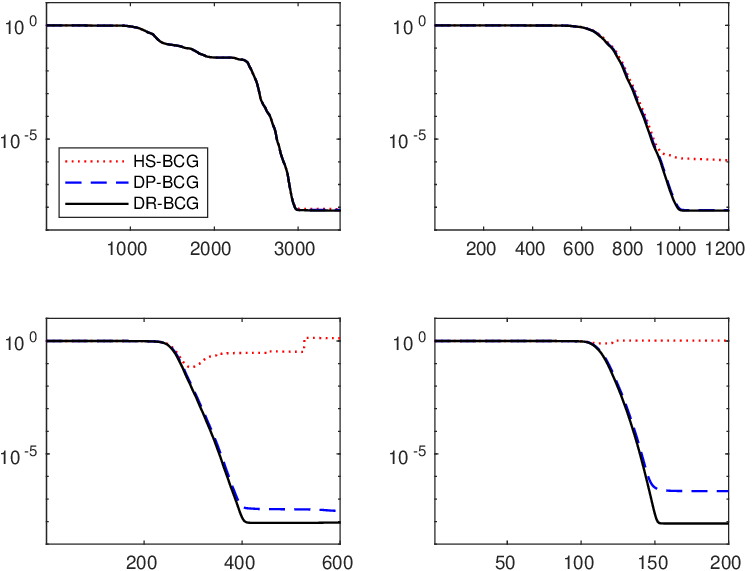} \caption{The convergence characteristic $\omega_{k}$ in HS-BCG (red
dotted), DP-BCG (blue dashed), and DR-BCG (black solid) for s3dkt3m2
and $b\in\mathbb{R}^{n\times m}$, $m=1$ (top left), $m=4$ (top
right), $m=16$ (bottom left), $m=64$ (bottom right).}
\label{fig-s3dkt3m2}
\end{figure}

In \figurename~\ref{fig-s3dkt3m2} we test the preconditioned block CG
versions with the matrix \texttt{s3dkt3m2} and $m=1$, $4$, $16$,
and $64$ randomly generated right-hand sides. This experiment also
confirms the significantly better numerical behavior of the DR-BCG
variant (black solid). The DP-BCG variant (blue dashed) does not lag
behind DR-BCG as much in terms of convergence delay as in the previous
experiment, but achieves a significantly worse level of maximum attainable
accuracy. The HS-BCG variant (red dotted) is almost unusable in this
experiment for $m>1$. Inaccuracies in the computations of the block
coefficients cause the HS-BCG variant not to converge.

Note that all the block versions have a similar computational
cost per iteration. For the most efficient version, i.e. DR-BCG, the
total number of matrix-vector products required to solve all systems
is approximately $3000$, $4000$, $6400$ and $9600$ for $m=1$,
$m=4$, $m=16$ and $m=64$ respectively. The totals show that the
number of matrix-vector products needed to solve a single system decreases
with increasing $m$. While for $m=1$ we need almost $3000$ matrix-vector
products to reach the maximum level of accuracy, for $m=4$, $m=16$
and $m=64$ we need only $1000$, $400$ and $150$ matrix-vector
products respectively to solve a single system. Computational time
is influenced by many factors, so it cannot be taken as an objective
measure of an algorithm's efficiency. Nevertheless, even on an old
computer with Intel Core i5-4570 CPU, 8 GB RAM, running Linux, one
can observe the significant advantages of block operations. The computation
times for $m=1$, $m=4$, $m=16$ and $m=64$ were approximately $98$,
$52$, $97$ and $140$ seconds respectively. So we were able to solve
more systems in almost the same time. The reduction in time for $m=4$
is probably due to better use of the computer cache.

\begin{figure}[!htbp]
\centering{}\includegraphics[width=\textwidth]{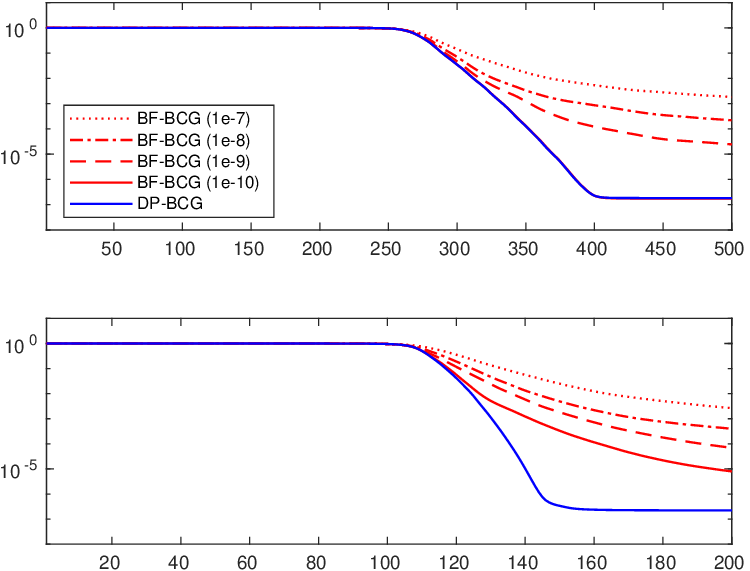} \caption{The convergence characteristic $\omega_{k}$ in DP-BCG (blue)
and BF-BCG (red) for s3dkt3m2 and random $b\in\mathbb{R}^{n\times m}$
with $m=16$ (top part) and $m=64$ (bottom part).}
\label{fig-s3dkq4m2}
\end{figure}

In the second part of numerical experiments we discuss the breakdown-free
BCG (BF-BCG) algorithm introduced in \cite[Algorithm~2]{HaYa2017},
and compare it with DP-BCG. We compare these two
algorithms because they are almost identical, except for line~\ref{line:dpp}
of Algorithm~\ref{alg:PDPBCG}. The aim of this experiment is
to demonstrate that algorithms with deflation, such as BF-BCG, do
not generally perform better than their counterparts (in this case
DP-BCG) which use Dubrulle's idea. For a comparison of DP-BCG and
DR-BCG, see the previous experiment.

As mentioned above, the only difference between BF-BCG
and DP-BCG is in line~\ref{line:dpp} of Algorithm~\ref{alg:PDPBCG}.
While in DP-BCG $p_{k}$ always has orthonormal columns and is of
size $n\times m$ even if the block $z_{k}+p_{k-1}\delta_{k}$ has
linearly dependent columns, in BF-BCG $p_{k}$ is determined as the
orthonormal basis of the space spanned by the columns of $z_{k}+p_{k-1}\delta_{k}$.
In exact arithmetic, the BF-BCG approach will avoid the breakdown
due to linear dependence of columns of $z_{k}+p_{k-1}\delta_{k}$.
Moreover, as shown in \cite[Theorems 3.1 and 3.2]{HaYa2017}, this
strategy ensures that the orthogonality relations are preserved. In
other words, convergence should not be affected by reducing the size
of the blocks $p_{k}$ in the rank deficient case. Note that while
the size of $p_{k}$ can be reduced in BF-BCG, the sizes of the blocks
$x_{k}$, $r_{k}$, and $z_{k}$ remain the same, and the coefficients
$\gamma_{k-1}$ and $\delta_{k}$ are rectangular matrices in general.
The authors of \cite{HaYa2017} consider only the case of exact linear
dependence of vectors in the block. A practical realization of their
approach can be found in \cite{GrTi2019}, where the rank revealing
QR algorithm with the tolerance of the square root of the machine
epsilon is used to determine the block~$p_{k}$. 

In \figurename~\ref{fig-s3dkq4m2} we compare DP-BCG (Algorithm~\ref{alg:PDPBCG})
and the breakdown-free BCG algorithm (BF-BCG) implemented similarly
as in \cite[Algorithm 2.3]{GrTi2019}. For the experiment we again
use the matrix \texttt{s3dkt3m2} and randomly generated right-hand
sides with $m=16$ (top part) and $m=64$ (bottom part). Instead of
the rank revealing QR which should be used in practical computations
to determine the orthonormal basis of the space spanned by the columns
of $z_{k}+p_{k-1}\delta_{k}$, we use the more expensive singular
value decomposition. In more details, we determine the desired orthonormal
basis as the left singular vectors corresponding to singular values
whose relative size, with respect to the largest singular value, is
greater than a given tolerance. In the experiment we used the tolerances
$10^{-7}$ (red dotted), $10^{-8}$ (red dashed), $10^{-9}$ (red
dashed dotted), and $10^{-10}$ (red solid). We can observe that in
finite precision computations, DP-BCG (blue solid) clearly outperforms
BF-BCG (red curves). We can see that reducing the size of the direction
block in BF-BCG slows down the convergence. While the idea introduced
in \cite{HaYa2017} is theoretically interesting, from a practical
point of view the Dubrulle DP-BCG seems to work better than BF-BCG.
The DP-BCG algorithm is not only simpler than BF-BCG, but also has
a better numerical behavior.

\section{Conclusions and future work}

In this paper we discussed several variants of the block conjugate
gradient (BCG) algorithms. We clarified the relationship between BCG
algorithms and the block Lanczos algorithm. In particular, we showed
how to easily compute the block Lanczos coefficients in BCG algorithms.
We also discussed important variants of BCG due to Dubrulle, including
their preconditioned variants. Numerical experiments show
that Dubrulle's variants, in particular DR-BCG, are superior to the
other BCG variants (HS-BCG or BF-BCG) and avoid the problems with
rank deficiency of the computed block coefficients. In our numerical
experiments, DR-BCG converged always faster than the other variants
and reached the best level of maximum attainable accuracy.

The results presented in this paper open up several research topics
for future work. First, we aim to generalize quadrature-based
bounds on the $A$-norm of the error to the block case. Although the
bounds are probably computable only from the block CG coefficients,
see \cite{MeTi2013} and \cite{B-MeTi2024} for classical CG, for
understanding and deriving the bounds we need the connection to block
Jacobi matrices. Second, we believe that Dubrulle's variants deserve
more attention and should be studied from both a theoretical and a
practical point of view. Theoretically, we still do not fully understand
which orthogonality relations are preserved in DR-BCG in the rank
deficient case and whether $\xi_{k-1}$ is always well defined. Practically,
we should also explore the other variants due to Dubrulle, which,
for example, use LU factorization with pivoting instead of a (more
expensive) QR factorization to compute regularized
blocks.

\subsubsection*{Acknowledgements}
The authors thank the anonymous referee for carefully reviewing the manuscript and providing helpful, constructive comments that have improved its presentation. Petr Tich\'y is a member of Ne\v{c}as Center for Mathematical Modeling.

\subsubsection*{Funding}
This work was supported by the Cooperatio Program of Charles University, research area Mathematics.

\subsubsection*{Authors' contributions}
These authors contributed equally to this work.

\subsubsection*{Data availability}
All data generated or analyzed during this study are available from the corresponding author on reasonable request.

\subsubsection*{Ethical approval}
Not applicable.

\subsubsection*{Competing interests}
Not applicable.              



\end{document}